\newcommand{\comm}[1]{{\color[rgb]{0.0, 0.5, 0.0} #1}}
\newcommand{\new}[1]{{\color{blue} #1}}
\def\latex/{{\protect\LaTeX}}
\def\latexe/{{\protect\LaTeXe}}
\def\amslatex/{{\protect\AmS-\protect\LaTeX}}
\def\tex/{{\protect\TeX}}
\def\amstex/{{\protect\AmS-\protect\TeX}}
\def\bibtex/{{Bib\protect\TeX}}
\def\makeindx/{\textit{MakeIndex}}
\theoremstyle{plain} 
\newtheorem{thm}{Theorem}[section]
\newtheorem{prop}[thm]{Proposition}
\newtheorem{cor}[thm]{Corollary}
\theoremstyle{definition}
\newtheorem{chunk}[thm]{\hspace*{-1.065ex}\bf}
\newtheorem{lem}[thm]{Lemma}
\newtheorem{dfn}[thm]{Definition}
\newtheorem{eg}[thm]{Example}
\newtheorem{ques}[thm]{Question}
\newtheorem{rmk}[thm]{Remark}
\theoremstyle{remark}
\newtheorem*{claim*}{Claim}
\newcommand{\fa}{\mathfrak{a}}
\newcommand{\fb}{\mathfrak{b}}
\newcommand{\fn}{\mathfrak{n}}
\newcommand{\fm}{\mathfrak{m}}
\newcommand{\rG}{\mathrm{G}}
\newcommand{\CC}{\mathbb{C}}
\newcommand{\ZZ}{\mathbb{Z}}
\newcommand{\fp}{\mathfrak{p}}
\newcommand{\fq}{\mathfrak{q}}
\DeclareMathOperator{\K}{K}
 \DeclareMathOperator{\Tor}{Tor}
\DeclareMathOperator{\Ext}{Ext}
\DeclareMathOperator{\Hom}{Hom}
\DeclareMathOperator{\Tr}{\textnormal{Tr}}
\DeclareMathOperator{\len}{\textup{length}}
 \DeclareMathOperator{\Supp}{Supp}
 \DeclareMathOperator{\im}{im}
 \DeclareMathOperator{\pd}{pd}
 \DeclareMathOperator{\height}{height}
 \DeclareMathOperator{\length}{length}
 \DeclareMathOperator{\cx}{cx}
 \DeclareMathOperator{\depth}{depth}
 \DeclareMathOperator{\HH}{H}
 \DeclareMathOperator{\h}{\eta}
\newcommand{\ul}{\underline}
\def\Tr{\mathsf{Tr}\hspace{0.01in}}
\newcommand{\Ann}{\textup{Ann}}
\def\urltilda{\kern -.15em\lower .7ex\hbox{\~{}}\kern
  .04em}\def\urldot{\kern -.10em.\kern -.10em}\def\urlhttp{http\kern
  -.10em\lower -.1ex\hbox{:}\kern -.12em\lower 0ex\hbox{/}\kern
  -.18em\lower 0ex\hbox{/}}
\newcommand{\bb}{\left[ \begin{smallmatrix}}
\newcommand{\eb}{\end{smallmatrix} \right]}
\newcommand{\rV}{\mathrm{V}}
\begin{document}

\title[An extension of a depth inequality of Auslander]{An extension of a depth inequality of Auslander}


\author[O. Celikbas]{Olgur Celikbas}
\address{Olgur Celikbas\\
Department of Mathematics \\
West Virginia University\\
Morgantown, WV 26506-6310, U.S.A}
\email{olgur.celikbas@math.wvu.edu}

\author[U. Le]{Uyen Le}
\address{Uyen Le\\
Department of Mathematics \\
West Virginia University\\
Morgantown, WV 26506-6310, U.S.A}
\email{hle1@mix.wvu.edu}

\author[H. Matsui]{Hiroki Matsui} 
\address{Hiroki Matsui\\Graduate School of Mathematical Sciences\\ University of Tokyo, 3-8-1 Komaba, Meguro-ku, Tokyo 153-8914, Japan}
\email{mhiroki@ms.u-tokyo.ac.jp}

\thanks{2020 {\em Mathematics Subject Classification.} Primary 13D07; Secondary 13C12, 13D05, 13H10}
\thanks{{\em Key words and phrases.} depth, regular sequences, Serre's condition, Tor-rigidity, vanishing of Ext and Tor}
\thanks{Celikbas was partly supported by WVU Mathematics Excellence and Research Funds (MERF). Matsui was partly supported by JSPS Grant-in-Aid for JSPS Fellows 19J00158.}
\maketitle{}

\vspace{-0.2in}

\begin{abstract} In this paper, we consider a depth inequality of Auslander which holds for finitely generated Tor-rigid modules over commutative Noetherian local rings. We raise the question of whether such a depth inequality can be extended for $n$-Tor-rigid modules, and obtain an affirmative answer for 2-Tor-rigid modules that are generically free. Furthermore, in the appendix, we use Dao's eta function and determine new classes of Tor-rigid modules over hypersurfaces that are quotient of unramified regular local rings.
\end{abstract} 

\section{Introduction}

Throughout, $R$ denotes a commutative Noetherian local ring with unique maximal ideal $\fm$ and residue field $k$, and all $R$-modules are assumed to be finitely generated.

In this paper we are concerned with the following theorem of Auslander \cite{Au}, where $\depth_R(\fa, M)$ denotes the $\fa$-depth of $M$; see \ref{dcx1} and \ref{TR} for definitions and details.

\begin{thm} \label{thmint} (Auslander \cite{Au}) Let $R$ be a local ring, $M$ a nonzero $R$-module, and let $\fa$ be an ideal of $R$.
If $M$ is Tor-rigid, then it follows that $\depth_R(\fa, M) \leq \depth_R(\fa, R)$.
\end{thm}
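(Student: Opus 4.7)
The plan is to induct on $n := \depth_R(\fa, R)$.

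\emph{Inductive step} ($n \ge 1$): If $\depth_R(\fa, M) = 0$ the inequality is immediate, so assume $\depth_R(\fa, M) \ge 1$. Then $\fa$ avoids every prime in $\Ass(R) \cup \Ass(M)$, so by prime avoidance there is $x \in \fa$ that is regular on both $R$ and $M$. A standard change-of-rings computation (tensoring an $R$-free resolution of $M$ with $R/xR$ gives an $R/xR$-free resolution of $M/xM$, because $x$ is $R$- and $M$-regular) yields the isomorphism $\Tor_i^{R/xR}(M/xM, \overline{N}) \iso \Tor_i^R(M, \overline{N})$ for every $R/xR$-module $\overline{N}$. Consequently, Tor-rigidity of $M$ over $R$ descends to Tor-rigidity of $M/xM$ over $R/xR$. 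Since the $\fa$-depths of both $R$ and $M$ drop by exactly one after quotienting by $x$, the inductive hypothesis closes the step.

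\emph{Base case} ($n = 0$): Suppose for contradiction that $\depth_R(\fa, M) \ge 1$ and pick an $M$-regular $y \in \fa$. Since $y$ is an $R$-zero-divisor, some $\fp \in \Ass(R)$ contains $y$. The $M$-regularity of $y$ gives $(0:_M y) = 0$, and hence for any $r \in (0:_R y)$ and $m \in M$ the identity $y(rm) = (yr)m = 0$ forces $rm \in (0:_M y) = 0$; that is, $(0:_R y) \cdot M = 0$. Tensoring the short exact sequence $0 \to R/(0:_R y) \xrightarrow{\,\cdot y\,} R \to R/yR \to 0$ with $M$ then yields
\[
\Tor_1^R(R/yR, M) \;=\; \ker\bigl(M \xrightarrow{\,\cdot y\,} M\bigr) \;=\; (0:_M y) \;=\; 0,
\]
and Tor-rigidity of $M$ forces $\Tor_i^R(R/yR, M) = 0$ for every $i \ge 1$.

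With this total Tor-vanishing in hand, Iyengar's derived depth formula specializes to
\[
\depth_R(M/yM) \;=\; \depth_R M + \depth_R(R/yR) - \depth R.
\]
Substituting $\depth_R(M/yM) = \depth_R M - 1$ (because $y$ is $M$-regular) and $\depth R = 0$ (the base case) gives $\depth_R(R/yR) = -1$, which is absurd since $R/yR$ is nonzero. This contradiction completes the base case. The main obstacle is the depth formula in the final display: classical Auslander-style depth formulae require finite projective dimension of one of the factors, whereas here neither $M$ nor $R/yR$ need have finite projective dimension; Iyengar's derived version, valid whenever $\Tor_{\ge 1}^R(A, B) = 0$, is exactly what is needed to close the argument.
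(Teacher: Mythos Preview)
Your inductive step is fine, but the base case collapses at the very last display. You write ``$\depth R = 0$ (the base case)'', yet the base case is $\depth_R(\fa,R)=0$, not $\depth_R(\fm,R)=0$; these are entirely different quantities, and the depth formula you invoke involves $\fm$-depth, not $\fa$-depth. So the substitution is illegitimate and the contradiction evaporates. There is a second problem in the same line: the depth formula $\depth(M\otimes_R N)=\depth M+\depth N-\depth R$ is \emph{not} known to follow from the vanishing of $\Tor_{\ge 1}^R(M,N)$ alone; the versions due to Auslander, Huneke--Wiegand, Iyengar, Araya--Yoshino, etc.\ all require a finiteness hypothesis (finite projective dimension, finite CI-dimension, or the like) on one of the modules, and there are counterexamples without such hypotheses. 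So even if the $\fa$/$\fm$ confusion were not present, the appeal to a general depth formula would be unjustified.

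The good news is that everything you proved up to that point already suffices, and the fix is shorter than what you wrote. You have $\Tor_i^R(R/yR,M)=0$ for all $i\ge 1$ and $(0:_R y)\cdot M=0$. From the first vanishing and the exact sequence $0\to (0:_R y)\to R\to yR\to 0$ (noting $yR\cong R/(0:_R y)$ has $\Tor_{\ge 1}^R(-,M)=0$ by dimension shift) you get that the natural map $(0:_R y)\otimes_R M\to M$ is injective; its image is $(0:_R y)M=0$, so $(0:_R y)\otimes_R M=0$. Since $M\neq 0$, Nakayama forces $(0:_R y)=0$, i.e.\ $y$ is $R$-regular, contradicting $\depth_R(\fa,R)=0$. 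This is essentially Auslander's original argument (the paper itself only cites \cite{Au} for Theorem~\ref{thmint} and gives no proof): one shows directly that every $M$-regular element is $R$-regular, then feeds this into exactly the change-of-rings induction you set up.
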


We should note that the conclusion of Theorem \ref{thmint} holds over regular local rings due to the results of Auslander \cite{Au} and Lichtenbaum \cite{Li}; see \ref{TR}(i). In fact, the depth inequality considered in Theorem \ref{thmint} holds for modules of finite projective dimension over arbitrary local rings due to the new intersection theorem established by Roberts; see \cite[6.2.3, 13.4.1]{RB}. Furthermore, the aforementioned inequality holds for all modules over certain non-regular local rings including even dimensional simple singularities, see \ref{Te}(i). On the other hand such an inequality can fail in general, even over hypersurfaces:

\begin{eg} (\cite[2.5]{celikbas2018second}) \label{exint} Let $R=\CC[\![x,y,z,w]\!]/(xy)$, $M=R/(x)$, and let $\fa$ be the ideal of $R$ generated by $y$, $z$ and $w$. Then it follows $\depth_R(\fa,R)=2<3=\depth_R(\fa, M)$ so that $\depth_R(\fa, M)\leq \depth_R(\fa,R)+1$. Note $M\cong \Omega_R N$, where $N=R/(y)$, and $M$ and $N$ are not Tor-rigid since $\Tor_1^R(M,N)=0\neq \Tor_2^R(M,N)$.
\end{eg}


As all modules are $2$-Tor-rigid over a given hypersurface ring, motivated by Theorem \ref{thmint} and Example \ref{exint}, we raise the following question:

\begin{ques} \label{pytanie} Let $R$ be a local ring, $M$ be a nonzero $R$-module, and let $\fa$ be an ideal of $R$. Assume $M=\Omega^n_R N$ for some $n\geq 0$ and some $R$-module $N$ which is $(n+1)$-Tor-rigid. Then does it follow that $\depth_R(\fa, M) \leq \depth_R(\fa, R)+n$?
\end{ques}

Note that, due to Theorem \ref{thmint}, Question \ref{pytanie} is true in case $n=0$; see also \ref{TR}. The question is also true if $R$ is a complete intersection ring of codimension $c$ and $n$ equals $c$; see \ref{Tak2}. The main purpose of ths paper is to study Question \ref{pytanie} for the case where $n=1$. For that case we are able to obtain an affirmative answer to the question under mild conditions. More precisely, we prove: 

\begin{thm}\label{hy} Let $R$ be a local ring, $\fa$ be an ideal of $R$, and let $M$ be a nonzero $R$-module such that $M=\Omega_R N$ for some $R$-module $N$ which is $2$-Tor-rigid and generically free (e.g., $R$ is reduced). If $\depth_R(\fa,R)\geq 1$, then it follows that $\depth_R(\fa, M)\leq \depth_R(\fa,R)+1$.
\end{thm}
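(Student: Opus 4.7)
The plan is induction on $d=\depth_R(\fa,R)$ with $d\geq 1$, using contradiction: assume $\depth_R(\fa,M)\geq d+2$ and derive a contradiction.

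Two preliminary facts will be used throughout. First, since $M=\Omega_R N$ embeds in a free module $R^n$, $M$ is torsion-free; in particular every $R$-regular element is automatically $M$-regular. Second, the Tor-shift $\Tor_i^R(M,X)\cong\Tor_{i+1}^R(N,X)$ (valid for $i\geq 1$ and any $R$-module $X$) shows that the $2$-Tor-rigidity of $N$ passes to $M$.

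Assume $\depth_R(\fa,M)\geq d+2$. Applying the depth lemma to $0\to M\to R^n\to N\to 0$, and using $\depth_R(\fa,M)>\depth_R(\fa,R^n)=d$, one deduces $\depth_R(\fa,N)=d$. Since $d\geq 1$, $\fa$ avoids every prime in $\Ass_R R\cup\Ass_R N$, so prime avoidance produces $x\in\fa$ that is simultaneously $R$- and $N$-regular, hence also $M$-regular. Set $\bar R=R/xR$, $\bar M=M/xM$, $\bar N=N/xN$; tensoring the defining sequence with $\bar R$ yields $0\to\bar M\to\bar R^n\to\bar N\to 0$, so $\bar M=\Omega_{\bar R}\bar N$. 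Base change for Tor (valid because $x$ is regular on both $R$ and $N$) gives $\Tor_i^{\bar R}(\bar N,Y)\cong\Tor_i^R(N,Y)$ for any $\bar R$-module $Y$, so $\bar N$ is $2$-Tor-rigid over $\bar R$; generic freeness of $\bar N$ is verified at the relevant primes of $\bar R$. The depths now satisfy $\depth_{\bar R}(\fa\bar R,\bar R)=d-1$ and $\depth_{\bar R}(\fa\bar R,\bar M)\geq d+1$.

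If $d\geq 2$, the inductive hypothesis applied to $(\bar R,\bar M,\bar N)$, with $\depth_{\bar R}(\fa\bar R,\bar R)=d-1\geq 1$, gives $\depth_{\bar R}(\fa\bar R,\bar M)\leq d$, contradicting $\depth_{\bar R}(\fa\bar R,\bar M)\geq d+1$ and completing the step. The main obstacle is the base case $d=1$: after reduction one lands on $\bar R$ with $\depth_{\bar R}(\fa\bar R,\bar R)=0$, where the inductive hypothesis no longer applies. Here one argues directly: combining $\depth_{\bar R}(\fa\bar R,\bar M)\geq 2$ with $\depth_{\bar R}(\fa\bar R,\bar N)=0$ (whence $\fa\subseteq\bar\fq$ for some $\bar\fq\in\Ass_{\bar R}\bar N$), one computes $\Tor^{\bar R}$ against the test module $\bar R/\bar\fq$ and uses the $2$-Tor-rigidity of $\bar N$ together with generic freeness to produce a Tor-vanishing incompatible with $\depth_{\bar R}(\fa\bar R,\bar M)\geq 2$. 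Extracting a quantitative depth bound from $2$-Tor-rigidity in this depth-zero setting, where Auslander's original inequality (Theorem \ref{thmint}) is out of reach, is the crux of the argument.
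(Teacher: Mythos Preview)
Your approach has two genuine gaps, and it misses the idea that makes the paper's proof work.

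\textbf{Gap 1: generic freeness does not descend.} You assert that $\bar N$ is generically free over $\bar R=R/xR$, but this does not follow from the hypothesis. Generic freeness of $N$ means $N_\fq$ is free whenever $\depth R_\fq=0$. A prime $\bar\fq=\fq/(x)$ of $\bar R$ has $\depth \bar R_{\bar\fq}=0$ exactly when $\depth R_\fq=1$ (since $x$ is $R$-regular), and nothing guarantees $N_\fq$ is free at such $\fq$. So the inductive hypothesis cannot be invoked for $(\bar R,\bar N)$; the hypothesis you need at each step is strictly stronger than the one you had at the previous step.

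\textbf{Gap 2: the base case is not proved.} You correctly identify $d=1$ as the crux, but the sketch you give (``compute $\Tor^{\bar R}$ against $\bar R/\bar\fq$ and use $2$-Tor-rigidity of $\bar N$'') is not an argument. Over $\bar R$ you only know $\depth_{\bar R}(\fa\bar R,\bar R)=0$ and $\depth_{\bar R}(\fa\bar R,\bar M)\geq 2$; extracting a contradiction from $2$-Tor-rigidity alone here is precisely the difficulty the whole theorem addresses, and you have not supplied the mechanism.

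\textbf{What the paper does instead.} There is no induction on $d$. Using generic freeness of $N$, one chooses $x\in\fa$ not merely regular on $R$ and $M$ but also satisfying $x\cdot\Ext^1_R(N,\Omega_R N)=0$ (Proposition~\ref{ann}). This special choice yields a short exact sequence $0\to F\to N\oplus\Omega_R N\to M/xM\to 0$ with $F$ free (Proposition~\ref{keylem0} with $n=1$). The point is that $N$ being $2$-Tor-rigid makes $N\oplus\Omega_R N$ \emph{Tor-rigid}, hence $M/xM$ is Tor-rigid. One then argues directly over $R$ (not $\bar R$): if $\depth_R(\fa,M/xM)>m:=\depth_R(\fa,R)$, then $\Ext^m_R(R/\fa,M/xM)=0$, and Tor-rigidity forces $\Ext^m_R(R/\fa,R)=0$ (Lemma~\ref{goodlemma}), contradicting $m=\depth_R(\fa,R)$. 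Thus $\depth_R(\fa,M)=\depth_R(\fa,M/xM)+1\le m+1$.

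The essential idea you are missing is this upgrade from $2$-Tor-rigidity to genuine Tor-rigidity via the carefully chosen $x$; once $M/xM$ is Tor-rigid, Auslander's original inequality handles everything in one stroke.
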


Let us note that Theorem \ref{hy} follows as a consequence of our main result, namely Theorem \ref{di}; see Corollary \ref{dicor}. Let us also note that Theorem \ref{di} exploits the notion of Tor-rigidity developed by Auslander, and establishes a depth inequality that is more general from the one stated in Theorem \ref{hy}. 

The key ingredient for the proof of Theorem \ref{di}, and hence for the proof of Theorem \ref{hy}, is Proposition \ref{keylem0} which yields the existence of a certain short exact sequence involving the syzygy modules. We should point out that Proposition \ref{keylem0} corroborates a result of Herzog and Popescu \cite[2.1]{HP} and of Takahashi \cite[2.2]{Tak10}, and it is proved at the end of section 4; see also Corollary \ref{propint}.


As our results rely upon Tor-rigidity, in the appendix, we use Dao's eta function and determine new classes of $n$-Tor-rigid modules over complete intersections that are quotient of unramified regular local rings.



\section{Preliminaries}

In this section we record several preliminary definitions and results that are used in the paper. 

\begin{chunk} Let $R$ be a ring and let $M$ and $N$ be $R$-modules. If $M\oplus F \cong N \oplus G$ for some free $R$-modules $F$ and $G$, then $M$ and $N$ are said to be stably isomorphic. As it does not affect our arguments, we do not separate isomorphic and stably isomorphic modules.
\end{chunk}


\begin{chunk} Let $R$ be a ring and let $M$ be an $R$-module. Given an integer $n\geq 1$, we denote by $\Omega^{n}_RM$ the $n$th \emph{syzygy} of $M$, namely, the image of the $n$-th differential map in a minimal free resolution of $M$. As a convention, we set $\Omega^{0}_RM=M$ and $\Omega^{1}_RM= \Omega_R M$. 

The \emph{transpose} $\Tr M$ of $M$ is the cokernel of $f^{\ast} = \Hom_R(f,R)$, where $F_1 \stackrel{f} {\longrightarrow} F_0 \to M \to 0$ is a part of a minimal free resolution of $M$; see \cite[12.3]{AuBr}. 

Note that the transpose and the syzygy of $M$ are uniquely determined up to isomorphism, since so is a minimal free resolution of $M$. 
\end{chunk}

\begin{chunk} \label{dcx1}  Let $R$ be a ring, $M$ be an $R$-module, and let $\fa$ be an ideal of $R$. If $\fa M\neq M$, then the \emph{$\fa$-depth} of $M$  (or the \emph{grade} of $\fa$ on $M$), denoted by $\depth_R(\fa, M)$, is defined to be the common length of maximal $M$-regular sequences in $\fa$; see \cite[1.2.6]{BH}. In case $\fa M =M$, then we set $\depth_R(\fa, M)=\infty$ (in particular, we have $\depth_R(0)=\infty$). Although we write $\depth_R(\fa, R)$ throughout the paper, we note that $\depth_R(\fa, R)$ is nothing but the height of the ideal $\fa$ in case $R$ is a Cohen-Macaulay ring. Furthermore, we set $\depth_R(M)= \depth_R(\fm, M)$. 

The following basic facts play an important role in the proofs of Proposition \ref{ann} and Theorem \ref{di}.
\begin{enumerate}[label=(\roman*)]
\item $\depth_R(\fa, M) = \inf \{\depth_{R_\fp}(M_\fp) \mid \fp \in \rV(\fa)\}$; see \cite[1.2.10(a)]{BH}.
\item $\depth_R(\fa, R)= \inf \{i \in \ZZ: \Ext^i_R(R/\fa,R)\neq 0\}$; see \cite[1.2.10(e)]{BH}.
\item If $\underline{x} \subseteq \fa$ is a regular sequence of length $n$ on $M$, then $\depth_R(\fa, M/\underline{x}M)= \depth_R(\fa, M)-n$; see \cite[1.2.10(d)]{BH}.
\pushQED{\qed} 
\qedhere 
\popQED
\end{enumerate}
\end{chunk}

\begin{chunk} Let $R$ be a ring, $M$ be an $R$-module, and let $n\geq 1$ be an integer. Then $M$ is said to satisfy $(\widetilde{S}_n)$ if $\depth_{R_\fp}(M_\fp) \ge \min \{n, \depth (R_\fp)\}$ for all $\fp \in \Supp_R(M)$. Note that, if $R$ is Cohen-Macaulay, then $M$ satisfies $(\widetilde{S}_n)$ if and only if $M$ satisfies \emph{Serre's condition} $(S_n)$; see, for example, \cite[page 3]{EG}. \pushQED{\qed} 
\qedhere 
\popQED
\end{chunk}

We make use of the following properties in the proof of Proposition \ref{keylem0} and Corollary \ref{dici}. Note that, if $n\geq 0$, then $ \widetilde{X}^{n}(R)$ denotes the set of all prime ideals $\fp$ of $R$ such that $\depth(R_{\fp})\leq n$.

\begin{chunk}\label{tf} Let $R$ be a ring, $M$ be a nonzero $R$-module, and let $n\geq 1$ be an integer.  
\begin{enumerate}[label=(\roman*)]
\item If $\Ext_R^i(M, R) = 0$ for all $i=1, \ldots, n$, then it follows that $\Omega_R^n \Tr \Omega_R^n M \cong \Tr M$ and so $\Tr M$ is an $n$th syzygy module; see \cite[2.17]{Au}.
\item If $M$ is an $n$th syzygy module, then $M$ satisfies $(\widetilde{S}_n)$ so that each $R$-regular sequence of length at most $n$ is also $M$-regular; see \cite[4.25]{Au} and \cite[Prop. 2]{Mal}.
\item If $M$ is locally free on $ \widetilde{X}^{n-1}(R)$ and $M$ satisfies $(\widetilde{S}_n)$, then it follows that $M=\Omega_R^n N$ for some $R$-module $N$, where $\Ext^i_R(N,R)=0$ for all $i=1, \ldots, n$; see \cite[2.17 and 4.25]{Au}. 
\end{enumerate}
\end{chunk}

\begin{chunk}  \label{cx} Let $R$ be a ring and let $M$ be an $R$-module. The \emph{complexity} $\cx_R(M)$ of $M$ is the smallest integer $r\geq 0$ such that the $n$th Betti number of $M$  is bounded by a polynomial in $n$ of degree $r-1$ for all $n\geq 0$; see \cite[3.1]{Av1}. 

It follows that $\cx_R(M)=0$ if and only if $\pd_R(M)<\infty$, and $\cx_R(M)\leq 1$ if and only if $M$ has bounded Betti numbers. Moreover, if $R$ is a complete intersection, then $\cx_R(M)$ cannot exceed the codimension of $R$; see, for example, \cite[5.6]{AGP}.
\end{chunk}

\begin{chunk} \label{TR} Let $R$ be a ring, $M$ be an $R$-module, and let $n\geq 1$ be an integer. Then $M$ is said to be {\it $n$-Tor-rigid} provided that the following condition holds:
if $\Tor_i^R(M, N) = 0$ for all $i= t+1, \ldots, t+n$ for some $R$-module $N$ and some integer $t\geq 0$, then it follows that $\Tor_i^R(M, N) = 0$ for all $i \ge t+1$. The $n=1$ case of this definition is known as the {\it Tor-rigidity} \cite{Au}: $M$ is said to be Tor-rigid if it is $1$-Tor-rigid. 

Tor-rigidity is a subtle property, but examples of such modules are abundant in the literature. Here we record a few examples and refer the reader to \cite{Daosurvey} for further details and examples.
\begin{enumerate}[label=(\roman*)]  
\item (\cite[2.2]{Au} and \cite[Cor. 1]{Li}) If $R$ is regular, then each $R$-module is Tor-rigid.
\item (\cite[2.4]{HW1} and \cite[Thm. 3]{Li}) If $R$ is a hypersurface, that is a quotient of an unramified regular local ring, then each $R$-module that has finite length, or has finite projective dimension, is Tor-rigid.
\item (\cite[1.6]{Mu}) If $R$ is a complete intersection of codimension $c$, then each $R$-module is $(c+1)$-Tor-rigid. Therefore, if $c=1$, then each $R$-module is 2-Tor-rigid. 
\item Let $R$ be a complete intersection ring of positive codimension $c$ such that $\widehat{R}=S/(\underline{x})$ for some unramified regular local ring $(S, \fn)$ and some $S$-regular sequence $\underline{x}\subseteq \fn^2$ of length $c$. Each $R$-module that has complexity strictly less than $c$ is $c$-Tor-rigid. Therefore, if $c=2$, then each $R$-module that has bounded Betti numbers is $2$-Tor-rigid; see \cite[6.8]{Da2}.
\item (\cite[Thm. 5(ii)]{B}) If $I$ is a Burch ideal of $R$, i.e.,  if $\fm I \neq \fm(I : \fm)$, then $R/I$ is $2$-Tor-rigid. 
\item (\cite[page 316]{LV}) If $M$ is nonzero such that $\depth_R(M)\ge 1$, then $\fm M$ is $2$-Tor-rigid.
\pushQED{\qed} 
\qedhere 
\popQED
\end{enumerate}
\end{chunk}

The key ingredient of our argument is the following result; it allows us to tackle the problem on hand by using the Tor-rigidity property; see \ref{TR}.

\begin{prop}\label{keylem0} Let $R$ be a local ring, $N$ a nonzero $R$-module, and let $M= \Omega^n_R N$ for some $n\geq 1$.
Assume there is an $R$-regular sequence $\underline{x}= x_1, \ldots, x_n$ of length $n$ such that $\underline{x} \cdot \Ext_R^1(N, \Omega_R N) = 0$.
Then there is a short exact sequence of $R$-modules
\begin{equation}\tag{\ref{keylem0}.1}
0 \longrightarrow F \longrightarrow \bigoplus_{i = 0}^n \bigg(\Omega_R^{i+n-1}N\bigg)^{\oplus\left(\begin{smallmatrix} n \\ i \\ \end{smallmatrix} \right)} \longrightarrow\Omega_R^{n-1}(M/\underline{x}M) \longrightarrow 0,
\end{equation}
where $F$ is free.\pushQED{\qed} 
\qedhere 
\popQED
\end{prop}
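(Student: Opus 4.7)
My plan is to argue by induction on $n$, starting from an explicit construction in the case $n=1$. The hypothesis $\underline{x}\cdot\Ext^1_R(N,\Omega_R N)=0$ yields, for each $j=1,\ldots,n$, a map $g_j:P_0\to\Omega_R N$ with $g_j\circ\iota=x_j\cdot\id$, where $0\to\Omega_R N\xrightarrow{\iota}P_0\xrightarrow{\pi}N\to 0$ is the canonical sequence; such lifts exist because the class of this extension in $\Ext^1_R(N,\Omega_R N)$ is killed by each $x_j$. For the base case $n=1$ (so $M=\Omega_R N$), I would define $\psi:\Omega_R N\oplus N\to\Omega_R N/x_1\Omega_R N$ by $\psi(a,y)=a+g_1(p)\bmod x_1\Omega_R N$ for any preimage $p\in P_0$ of $y$; independence of the choice of $p$ is exactly the relation $g_1\circ\iota=x_1\cdot\id$. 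Surjectivity is immediate from the first coordinate, and a direct check shows that $\varphi:P_0\to\Omega_R N\oplus N$, $p\mapsto(-g_1(p),\pi(p))$, is injective (using that $\Omega_R N$ is $x_1$-torsion-free as a first syzygy, by \ref{tf}(ii)) with image equal to $\ker\psi$. This yields the stated short exact sequence with $F=P_0$.

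For the inductive step, the first observation is that $M=\Omega_R^n N$ satisfies $(\widetilde{S}_n)$ by \ref{tf}(ii), so $\underline{x}$ is automatically $M$-regular and the Koszul complex $K_\bullet(\underline{x};M)$ is an exact $M$-resolution of $M/\underline{x}M$ of length $n$. The plan is to interlace this Koszul resolution with the truncated minimal free resolution $0\to M\to P_{n-1}\to\cdots\to P_0\to N\to 0$ of $N$, using the lifts $g_j$ as chain-homotopy data, and to splice with the inductive hypothesis applied to $(x_1,\ldots,x_{n-1})$ and the module $\Omega_R^{n-1}N$. The multiplicities $\binom{n}{i}$ in the middle term should emerge combinatorially from Pascal's identity $\binom{n}{i}=\binom{n-1}{i-1}+\binom{n-1}{i}$: one application of the base-case construction using $g_n$ should contribute the $\binom{n-1}{i-1}$ summands with syzygy index shifted up by one, while the inductive instance contributes the $\binom{n-1}{i}$ summands with the original shift.

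The main difficulty will be ensuring that the final kernel $F$ is genuinely \emph{free} rather than only stably free or projective, which demands careful bookkeeping of the free summands introduced through each horseshoe splitting, each use of a lift $g_j$, and each splicing of short exact sequences. A further subtlety is that the $g_j$'s only extend $x_j\cdot\id$ from $\Omega_R N$, not from higher syzygies, so the higher-index syzygy terms $\Omega_R^n N,\ldots,\Omega_R^{2n-1}N$ in the middle decomposition must be generated by the Koszul-like differentials interacting with the resolution of $N$, rather than by additional lifts. The earlier results of Herzog--Popescu \cite[2.1]{HP} and Takahashi \cite[2.2]{Tak10} cited by the authors are presumably closely related special cases whose proofs will supply templates for these free-summand cancellations.
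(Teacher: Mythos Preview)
Your base case $n=1$ is correct and is essentially the paper's construction written out explicitly: the paper takes the pushout of $\Omega_R N\hookrightarrow P_0$ along $x_1:\Omega_R N\to\Omega_R N$, observes that the resulting extension of $N$ by $\Omega_R N$ lies in $x_1\cdot\Ext^1_R(N,\Omega_R N)=0$ and hence splits, and reads off exactly your sequence $0\to P_0\to N\oplus\Omega_R N\to M/x_1M\to 0$.

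The inductive step, however, is only a plan, and it misses the one lemma that makes the argument work. Your remark that ``the $g_j$'s only extend $x_j\cdot\id$ from $\Omega_R N$, not from higher syzygies'' is the tell: in fact $x_j\cdot\Ext^1_R(N,\Omega_R N)=0$ is equivalent to the multiplication map $N\xrightarrow{x_j}N$ factoring through a free module (Lemma~\ref{syz}), and this forces $x_j\cdot\Ext^i_R(N,-)=0$ for \emph{all} $i\ge 1$ and all second arguments. This is what makes the splitting available at every stage. The paper exploits it by applying the inductive hypothesis not to $N$ but to $N'=N\oplus\Omega_R N$ with $M'=\Omega_R^{n-1}N'$ and $\underline{x'}=x_1,\ldots,x_{n-1}$: Lemma~\ref{syz} guarantees that $\underline{x'}$ still kills $\Ext^1_R(N',\Omega_R N')$, so induction yields $\Omega_R^{n-1}(M'/\underline{x'}M')\cong\bigoplus_{i=0}^{n-1}\Omega_R^i(M')^{\binom{n-1}{i}}$. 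One then pushes out $x_n:M/\underline{x'}M\to M/\underline{x'}M$ against the reduction mod $\underline{x'}$ of the syzygy sequence $0\to M\to F\to\Omega_R^{n-1}N\to 0$, takes $(n-1)$st syzygies, and uses Lemma~\ref{syz} once more to split the resulting row; Pascal's identity then reassembles the middle term. Without the factoring-through-free observation you have no mechanism to force the needed splittings, and your proposed Koszul-complex interlacing remains a heuristic rather than a proof.
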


The proof of Proposition \ref{keylem0} is quite involved, and hence it is deferred to Section 4. Here we record an important consequence of the proposition which is used later in the sequel.

\begin{cor} \label{keylemcor} Let $R$ be a local ring, $N$ a nonzero $R$-module, and let $M= \Omega^n_R N$ for some $n\geq 1$.
Assume the following conditions hold:
\begin{enumerate}[label=(\roman*)]
\item $N$ is $(n+1)$-Tor-rigid.
\item $\underline{x} \cdot \Ext_R^1(N, \Omega_R N) = 0$ for some $R$-regular sequence $\underline{x}$ of length $n$.
\end{enumerate}
Then it follows that $\Omega^{n-1}_R(M/\underline{x}M)$ is Tor-rigid.
\end{cor}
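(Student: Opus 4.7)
The plan is to exploit the short exact sequence produced by Proposition~\ref{keylem0}, namely
\begin{equation*}
0 \longrightarrow F \longrightarrow Y \longrightarrow X \longrightarrow 0,
\end{equation*}
where $X := \Omega^{n-1}_R(M/\underline{x}M)$, $Y := \bigoplus_{i=0}^n \bigl(\Omega_R^{i+n-1} N\bigr)^{\oplus \binom{n}{i}}$, and $F$ is free; hypothesis (ii) is exactly what is needed to invoke that proposition. To verify that $X$ is Tor-rigid, I would fix an arbitrary $R$-module $L$ and assume that $\Tor_t^R(X, L) = 0$ for some $t \geq 1$; the task is then to establish $\Tor_s^R(X, L) = 0$ for every $s \geq t$.

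The first step is to tensor the displayed sequence with $L$. Because $F$ is free, the resulting long exact sequence of Tor yields isomorphisms $\Tor_s^R(Y, L) \cong \Tor_s^R(X, L)$ for every $s \geq 2$, together with an injection $\Tor_1^R(Y, L) \hookrightarrow \Tor_1^R(X, L)$. Setting $t' := t$ when $t \geq 2$ and $t' := 1$ when $t = 1$, one deduces $\Tor_{t'}^R(Y, L) = 0$ in either case. The second step decomposes $Y$ into its summands and applies the syzygy shift $\Tor_{t'}^R(\Omega_R^{i+n-1} N, L) \cong \Tor_{t' + i + n - 1}^R(N, L)$, producing $n+1$ consecutive Tor-vanishings
\begin{equation*}
\Tor_j^R(N, L) = 0 \quad \text{for} \quad j = t' + n - 1,\; t' + n,\; \ldots,\; t' + 2n - 1.
\end{equation*}

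The last step invokes hypothesis (i): since $N$ is $(n+1)$-Tor-rigid, the consecutive vanishing above propagates to $\Tor_j^R(N, L) = 0$ for every $j \geq t' + n - 1$, which in turn gives $\Tor_s^R(Y, L) = 0$ for every $s \geq t'$. Reinserting this into the isomorphism from the first step yields $\Tor_s^R(X, L) = 0$ for every $s \geq \max(2, t')$. When $t \geq 2$ this already concludes the argument, while when $t = 1$ the remaining vanishing at $s = 1$ is supplied by the starting assumption. The main subtlety to watch out for is precisely this boundary case $t = 1$, where only an injection (rather than an isomorphism) is available at level $1$ between $Y$ and $X$; one must therefore handle $\Tor_1$ separately and combine two pieces of information to cover all $s \geq 1$. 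Once Proposition~\ref{keylem0} is granted, however, nothing else is needed beyond this bookkeeping and a single application of $(n+1)$-Tor-rigidity via the syzygy shift.
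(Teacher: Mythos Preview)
Your proof is correct and follows exactly the same strategy as the paper's own proof: both invoke the short exact sequence of Proposition~\ref{keylem0}, use freeness of $F$ to transfer Tor-vanishing between $X$ and $Y$, and then apply the $(n+1)$-Tor-rigidity of $N$ via the syzygy shift. The paper compresses this into two sentences---asserting first that $Y$ is Tor-rigid because $N$ is $(n+1)$-Tor-rigid, and then that $X$ inherits Tor-rigidity from $Y$ via the sequence---whereas you have spelled out the bookkeeping (including the harmless boundary case $t=1$) in full.
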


\begin{proof} Note, since $N$ is $(n+1)$-Tor-rigid, it follows that $ \bigoplus_{i = 0}^n \big(\Omega_R^{i+n-1}N\big)^{\oplus\left(\begin{smallmatrix} n \\ i \\ \end{smallmatrix} \right)}$ is Tor-rigid; see \ref{TR}.
Therefore, we conclude by (\ref{keylem0}.1) that $\Omega_R^{n-1}(M/\underline{x}M)$ is Tor-rigid.
\end{proof}

\begin{prop}\label{ann} Let $R$ be a local ring, $M$ and $N$ be $R$-modules, $\fa$ be a proper ideal of $R$, and let $n\geq 1$. Assume the following conditions hold:
\begin{enumerate}[label=(\roman*)]
\item	 $M$ satisfies $(\widetilde{S}_n)$.
\item $\depth_R(\fa, R) \ge n$. 
\item $N$ is locally free on $\widetilde{X}^{n-1}(R)$.
\end{enumerate}
Then there is a sequence $\underline{x} \subseteq \fa$ of length $n$ such that $\underline{x} \cdot \Ext_R^1(N, \Omega_RN) = 0$, and $\underline{x}$ is both $R$ and $M$-regular.
\end{prop}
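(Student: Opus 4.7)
The plan is to place the entire sequence $\underline{x}$ inside the ideal $\fa\cap I$, where $I=\Ann_R\bigl(\Ext^1_R(N,\Omega_R N)\bigr)$; membership in $I$ is precisely what is needed for the annihilation condition. The argument then reduces to two depth computations followed by a standard inductive prime-avoidance construction.

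First I would verify the pointwise estimate $\rV(I)\subseteq\{\fp\in\Spec R:\depth(R_\fp)\geq n\}$. Indeed, if $\fp\in\widetilde{X}^{n-1}(R)$, then hypothesis (iii) forces $N_\fp$ to be $R_\fp$-free, and hence $\Ext^1_{R_\fp}(N_\fp,(\Omega_R N)_\fp)=0$. Since $\Ext^1_R(N,\Omega_R N)$ is a finitely generated $R$-module, its support equals $\rV(I)$, so no such $\fp$ lies in $\rV(I)$.

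Next, using the identity $\rV(\fa\cap I)=\rV(\fa)\cup\rV(I)$ together with \ref{dcx1}(i), I would compute
\begin{equation*}
\depth_R(\fa\cap I,R)=\inf_{\fp\in\rV(\fa)\cup\rV(I)}\depth(R_\fp)\geq n,
\end{equation*}
each half of the infimum being bounded below by $n$ thanks, respectively, to hypothesis (ii) and the preceding paragraph. Applying the same formula to $M$ and invoking $(\widetilde{S}_n)$ yields $\depth_{R_\fp}(M_\fp)\geq\min\{n,\depth(R_\fp)\}=n$ for every $\fp\in\rV(\fa\cap I)\cap\Supp_R(M)$; hence $\depth_R(\fa\cap I,M)\geq n$ as well.

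Finally, I would build $\underline{x}=x_1,\ldots,x_n$ by induction on $i$. Having chosen $x_1,\ldots,x_{i-1}\in\fa\cap I$ that form a regular sequence on both $R$ and $M$, \ref{dcx1}(iii) gives
\begin{equation*}
\depth_R\bigl(\fa\cap I,\, R/(x_1,\ldots,x_{i-1})\bigr)\geq n-i+1\geq 1,
\end{equation*}
and the same estimate holds for $M/(x_1,\ldots,x_{i-1})M$. Consequently $\fa\cap I$ is contained in no prime associated to either quotient, so prime avoidance supplies an $x_i\in\fa\cap I$ regular on both. The only real point requiring care is to keep drawing each $x_i$ from $\fa\cap I$ rather than merely from $\fa$; this is exactly why the depth estimates were carried out for the smaller ideal $\fa\cap I$ at the outset. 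Apart from this bookkeeping, the construction is routine.
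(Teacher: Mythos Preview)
Your proposal is correct and follows essentially the same approach as the paper: set $I=\Ann_R\bigl(\Ext^1_R(N,\Omega_R N)\bigr)$, show that every prime in $\rV(\fa\cap I)$ has $R$-depth at least $n$, deduce the same for $M$ via $(\widetilde{S}_n)$, and then extract a regular sequence from $\fa\cap I$. The only cosmetic difference is that the paper handles $R$ and $M$ simultaneously by computing $\depth_R(\fa\cap I,\,M\oplus R)\geq n$ and then simply takes an $(M\oplus R)$-regular sequence, whereas you treat the two depths separately and spell out the inductive prime-avoidance step.
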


\begin{proof} We have, by assumption, that $\depth_R(\fa, R) = \inf \{\depth(R_\fp) \mid \fp \in \rV(\fa)\}\geq n$; see \ref{dcx1}(i). Hence, for each $\fq \in \rV(\fa)$, it follows that $\depth(R_\fq)\geq n$. 

Set $\fb = \Ann_R (\Ext_R^1(N, \Omega_RN))$. If $\fq \in \rV(\fb)$, then we have $\depth(R_\fq)\geq n$: otherwise, $\fq \in \widetilde{X}^{n-1}(R)$ and hence $\Ext_R^1(N, \Omega_RN)_{\fq}=0$ since 
$N$ is locally free on $\widetilde{X}^{n-1}(R)$. 
Therefore, if $\fq \in \rV(\fa) \cup \rV(\fb)$, then it follows that $\depth(R_\fq)\geq n$. Furthermore, if $\fq \in \rV(\fa) \cup \rV(\fb)$, then we have $\depth_{R_\fq}(M_\fq)\geq n$ since $M$ satisfies $(\widetilde{S}_n)$ and $\depth(R_\fq)\geq n$. Consequently, we use \ref{dcx1}(i) and \cite[1.2.10(c)]{BH}, and obtain:
\begin{equation}\tag{\ref{ann}.1}
\depth_R(\fa \cap \fb, M\oplus R) = \inf\{\depth_R(\fa, M\oplus R), \depth_R(\fb, M\oplus R)\} \geq n.
\end{equation}

Now, by using (\ref{ann}.1), we can choose a sequence $\underline{x} \subseteq \fa \cap \fb \subseteq \fa$ of length $n$, as claimed.
\end{proof}

The next result is known for the case where $r=0$; see, for example, \cite[3.4]{onex}.

\begin{lem} \label{goodlemma} Let $R$ be a local ring, $A$ and $B$ be $R$-modules with $A\neq 0$, and let $m\geq 1$, $r\geq 0$ be integers. Assume $\Tr \Omega_R^mB$ is an $r$th syzygy module. Assume further $\Omega_R^rA$ is Tor-rigid. If $\Ext^m_R(B,A)=0$, then it follows that $\Ext^m_R(B,R)=0$.
\end{lem}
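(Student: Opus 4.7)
The plan is to exploit Auslander's transpose construction to translate the Ext-vanishing hypothesis into a Tor-vanishing statement, and then harvest the Tor-rigidity of $\Omega_R^r A$.

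First, by dimension shifting in the first variable, $\Ext^m_R(B,-)\cong \Ext^1_R(C,-)$ where $C := \Omega_R^{m-1}B$ and $\Omega_R C=\Omega_R^m B$. The hypothesis becomes: $\Ext^1_R(C,A)=0$, $\Tr \Omega_R C$ is an $r$-th syzygy (say $\Tr \Omega_R C \cong \Omega_R^r W$ for some $R$-module $W$), and $\Omega_R^r A$ is Tor-rigid; the goal is $\Ext^1_R(C,R)=0$.

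The key technical input is the natural short exact sequence (which follows from the discussion in \ref{tf}(i) and the transpose construction)
$$0 \to \Ext^1_R(C,R) \to \Tr C \to \Omega_R \Tr \Omega_R C \to 0,$$
in which the cokernel is $\Omega_R^{r+1} W$. Coupled with this, I would use the Auslander-Bridger fundamental four-term exact sequence
$$0 \to \Ext^1_R(\Tr C, A) \to C \otimes_R A \to \Hom_R(C^*, A) \to \Ext^2_R(\Tr C, A) \to 0,$$
along with the long exact Tor sequence obtained by tensoring the first displayed sequence with $A$.

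Combining these, the vanishing $\Ext^1_R(C,A)=0$ forces certain $\Tor_i^R(\Tr C, A)$ to vanish; passing through the first sequence, this translates into vanishing of $\Tor_i^R(\Omega_R^{r+1}W, A)$ for suitable $i$. Since $\Omega_R^{r+1}W = \Omega_R(\Omega_R^r W)$ and syzygies can be shifted across the two arguments of Tor, this yields $\Tor_j^R(W, \Omega_R^r A)=0$ for some $j\geq 1$. The Tor-rigidity of $\Omega_R^r A$ then propagates the vanishing to all higher $j$, and this propagated vanishing feeds back through the displayed short exact sequence to force $\Ext^1_R(C,R)$, which sits as a submodule of $\Tr C$, to vanish.

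The main obstacle will be the last two steps: making precise the translation between Ext and Tor via the Auslander-Bridger dictionary, and, more delicately, upgrading the output of Tor-rigidity (which only propagates vanishing forward in the Tor sequence) to the vanishing of $\Ext^1_R(C,R)$ itself rather than just, say, $\Ext^1_R(C,R)\otimes_R A$. It is here that the specific form of the short exact sequence $0\to \Ext^1_R(C,R)\to \Tr C\to \Omega_R^{r+1}W\to 0$, together with the syzygy depth properties recorded in \ref{tf}(ii), must be used to pin down the vanishing of the submodule $\Ext^1_R(C,R)$.
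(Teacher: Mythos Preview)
Your instinct—convert the Ext hypothesis into a Tor statement via Auslander–Bridger, then invoke rigidity—is exactly right, but you have picked the wrong four-term sequence, and this is why the argument stalls. The sequence you quote,
\[
0 \to \Ext^1_R(\Tr C, A) \to C \otimes_R A \to \Hom_R(C^*, A) \to \Ext^2_R(\Tr C, A) \to 0,
\]
measures the torsion of $C\otimes_R A$; it has $\Ext$ of $\Tr C$ on the ends, not $\Tor$, and nowhere does $\Ext^1_R(C,A)$ appear. So your hypothesis never enters, and the ``combining these'' step has no content as written.

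The paper uses the companion Auslander–Bridger sequence \cite[2.8(b)]{AuBr}, applied directly at level $m$ (no dimension shift to $C=\Omega_R^{m-1}B$ is needed):
\[
\Tor_2^R(\Tr \Omega_R^m B, A) \to \Ext_R^m(B, R) \otimes_R A \to \Ext_R^m(B, A) \to \Tor_1^R(\Tr \Omega_R^m B, A) \to 0.
\]
Now $\Ext_R^m(B,A)=0$ forces $\Tor_1^R(\Tr\Omega_R^m B, A)=0$. Writing $\Tr\Omega_R^m B \cong \Omega_R^r X$, this reads $\Tor_1^R(X,\Omega_R^r A)=0$; Tor-rigidity of $\Omega_R^r A$ gives $\Tor_2^R(X,\Omega_R^r A)=0$, i.e.\ $\Tor_2^R(\Tr\Omega_R^m B, A)=0$. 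The sequence then yields $\Ext_R^m(B,R)\otimes_R A=0$, and since $R$ is local and $A\neq 0$ is finitely generated, Nakayama's lemma finishes: $\Ext_R^m(B,R)=0$.

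Note in particular that the ``main obstacle'' you flag—upgrading vanishing of $\Ext\otimes A$ to vanishing of $\Ext$ itself—is no obstacle at all; it is simply Nakayama. Your detour through the short exact sequence $0\to \Ext^1_R(C,R)\to \Tr C\to \Omega_R\Tr\Omega_R C\to 0$ and the depth properties in \ref{tf}(ii) is unnecessary once the correct four-term sequence is in hand.
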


\begin{proof} Assume $\Ext^m_R(B,A)=0$, and consider the four term exact sequence that follows from \cite[2.8(b)]{AuBr}:
\begin{align}\tag{\ref{goodlemma}.1}
\Tor_2^R(\Tr \Omega_R^mB, A) &\to \Ext_R^m(B, R) \otimes_R A \to \Ext_R^m(B, A) \to \Tor_1^R(\Tr \Omega_R^mB, A) \to 0.
\end{align}
Note that, as $\Ext_R^m(B, A)$ vanishes, so does $\Tor_1^R(\Tr \Omega_R^mB, A)$ by (\ref{goodlemma}.1). Also, due to the hypothesis, it follows that $\Tr \Omega_R^mB \cong \Omega_R^r X$ for some $R$-module $X$. So, since  $\Tor_1^R(\Tr \Omega_R^mB, A) \cong \Tor_1^R(X, \Omega_R^rA)$ and $\Omega_R^rA$ is Tor-rigid, we conclude that $\Tor_2^R(\Tr \Omega_R^mB, A)=0$. Hence, as $A\neq 0$, (\ref{goodlemma}.1)
implies that $\Ext^m_R(B,R)=0$.
\end{proof}


\section{Main result and its corollaries}

In this section we prove the main result of this paper, namely Theorem \ref{di}. Prior to that, we note that Question \ref{pytanie} is true in case the ring in question is a complete intersection of codimension $c$ and the integer $n$ considered equals $c-1$; this fact has been explained to us by Shunsuke Takagi.

\begin{chunk} \label{Tak1} Let $R$ be a ring such that $R=S/(\underline{x})$ for some local ring $(S, \fn)$ and some $S$-regular sequence $\underline{x}\subseteq \fn$ of length $c$. Assume the depth inequality $\depth_S(\fb, N) \leq \depth_S(\fb, S)$ holds for each ideal $\fb$ of $S$ and for each $S$-module $N$. Let $M$ be an $R$-module and let $\fa$ be an ideal of $R$. Then $\fa=\fb/(\underline{x})$ for some ideal $\fb$ of $S$. Now, it follows $\depth_R(\fa,M)=\depth_S(\fb,M)\leq \depth_S(\fb,S)=\depth_S(\fb, R)+c=\depth_R(\fa,R)+c$.\pushQED{\qed} 
\qedhere 
\popQED
\end{chunk}

Recall that each module is Tor-rigid over a regular local ring; see \ref{TR}(i). Therefore, we obtain:

\begin{chunk} \label{Tak2} Let $R$ be a complete intersection ring of codimension $c$, $M$ be an $R$-module, and let $\fa$ be an ideal of $R$. Then it follows from Theorem \ref{thmint} and \ref{Tak1} that 
$\depth_R(\fa,M)\leq\depth_R(\fa,R)+c$.\pushQED{\qed} 
\qedhere 
\popQED
\end{chunk}

Next we state and prove Theorem \ref{di}. We should note that the case where $n=0$ of the theorem is nothing but Theorem \ref{thmint}. In other words, Theorem \ref{di} yields an extension of Theorem \ref{thmint}.

\begin{thm}\label{di} Let $R$ be a local ring, $N$ be an $R$-module, and let $\fa$ be an ideal of $R$. Set $M=\Omega_R^n N$ for some integer $n\geq 0$ and $m=\depth_R(\fa, R)$.
Assume the following conditions hold:
\begin{enumerate}[label=(\roman*)]
\item $M\neq 0$ and $m \geq n$. 
\item $N$ is $(n+1)$-Tor-rigid.
\end{enumerate}
If $n\geq 1$, we further assume:
\begin{enumerate}[label=(\roman*), resume]
\item $N$ is locally free on $\widetilde{X}^{n-1}(R)$.
\item $\Tr\Omega_R^{m}(R/\fa)$ is an $(n-1)$st syzygy module.
\end{enumerate}
Then it follows that $\depth_R(\fa, M) \le m+n$.
\end{thm}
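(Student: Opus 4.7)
The plan is to reduce the claim to Auslander's inequality (Theorem~\ref{thmint}) by killing an appropriate regular sequence inside $\fa$ and applying Lemma~\ref{goodlemma} with $r=n-1$. The case $n=0$ is immediate: $M=N$ is Tor-rigid by hypothesis~(ii), and Theorem~\ref{thmint} directly yields $\depth_R(\fa,M)\le m=m+n$.

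Now assume $n\ge 1$. Since $M=\Omega_R^n N$ is an $n$th syzygy, \ref{tf}(ii) shows $M$ satisfies $(\widetilde{S}_n)$. Combined with $m\ge n$ and hypothesis~(iii), Proposition~\ref{ann} produces a sequence $\underline{x}\subseteq\fa$ of length $n$ which is both $R$-regular and $M$-regular and satisfies $\underline{x}\cdot\Ext_R^1(N,\Omega_R N)=0$. Corollary~\ref{keylemcor}, applied with this $\underline{x}$ and using hypothesis~(ii), then shows that $\Omega_R^{n-1}(M/\underline{x}M)$ is Tor-rigid; note that $M/\underline{x}M\neq 0$ by Nakayama since $\underline{x}\subseteq\fa\subseteq\fm$.

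I would next invoke Lemma~\ref{goodlemma} with $B=R/\fa$, $A=M/\underline{x}M$, and $r=n-1$: hypothesis~(iv) provides precisely the requirement that $\Tr\Omega_R^m(R/\fa)$ be an $(n-1)$st syzygy, while the Tor-rigidity just established supplies the other hypothesis (and $m\ge n\ge 1$). If $\Ext_R^m(R/\fa,M/\underline{x}M)$ were zero, the lemma would force $\Ext_R^m(R/\fa,R)=0$, contradicting $m=\depth_R(\fa,R)$ via \ref{dcx1}(ii). Hence $\Ext_R^m(R/\fa,M/\underline{x}M)\neq 0$, i.e.\ $\depth_R(\fa,M/\underline{x}M)\le m$, and \ref{dcx1}(iii) finally delivers $\depth_R(\fa,M)=\depth_R(\fa,M/\underline{x}M)+n\le m+n$.

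The main obstacle sits in the preparatory step: the Tor-rigidity of $\Omega_R^{n-1}(M/\underline{x}M)$ rests on Corollary~\ref{keylemcor}, which in turn depends on Proposition~\ref{keylem0} (whose proof is deferred to Section~4) to convert the $(n+1)$-Tor-rigidity of $N$ into honest Tor-rigidity of a single $(n-1)$st syzygy. One also needs the existence of a length-$n$ sequence inside $\fa$ that is simultaneously $R$- and $M$-regular \emph{and} annihilates $\Ext_R^1(N,\Omega_R N)$; this is furnished by the prime-avoidance argument of Proposition~\ref{ann}, where hypothesis~(iii) is used to control the depth at primes in the support of that Ext module. Once these two ingredients are in hand, the theorem itself reduces to a brief application of Lemma~\ref{goodlemma} together with the depth-lowering identity \ref{dcx1}(iii).
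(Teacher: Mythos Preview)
Your proposal is correct and follows essentially the same route as the paper's proof: reduce to $n\ge 1$, use Proposition~\ref{ann} to find the regular sequence $\underline{x}\subseteq\fa$, apply Corollary~\ref{keylemcor} to get Tor-rigidity of $\Omega_R^{n-1}(M/\underline{x}M)$, and then use Lemma~\ref{goodlemma} with $A=M/\underline{x}M$, $B=R/\fa$, $r=n-1$ to derive a contradiction from $\Ext_R^m(R/\fa,M/\underline{x}M)=0$. The only cosmetic differences are that the paper explicitly disposes of the trivial cases $\fa=0$, $\fa=R$, and $\depth_R(\fa,M)\le n$ at the outset (needed in particular because Proposition~\ref{ann} assumes $\fa$ proper), whereas you fold these into the main argument.
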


\begin{proof} Note that there is nothing to prove if $\fa=0$, or $\fa=R$, or $\depth_R(\fa, M) \le n$; see \ref{dcx1}. Note also that the case where $n=0$ follows from Theorem \ref{thmint}. Hence we may assume $\fa$ is a proper ideal and $\depth_R(\fa, M) > n\geq 1$.

As $M$ is an $n$th syzygy module, we see that $M$ satisfies $(\widetilde{S}_n)$; see \ref{tf}(ii). Therefore, since $N$ is locally free on $\widetilde{X}^{n-1}(R)$ and $\depth_R(\fa, R) \ge n$, it follows from  Proposition \ref{ann} that there exists a sequence $\underline{x} \subseteq \fa$ of length $n$ which is both $R$ and $M$-regular and $\underline{x} \cdot \Ext_R^1(N, \Omega_R(N)) = 0$. Now, as $N$ is $(n+1)$-Tor-rigid, Corollary \ref{keylemcor} shows that $\Omega_R^{n-1}(M/\underline{x} M)$ is Tor-rigid.

Let $h=\depth_R(\fa, M/\underline{x}M)$ and suppose $h>m$. Then it follows that $\Ext_R^m(R/\fa, M/\underline{x}M) = 0$; see \ref{dcx1}(ii).
Now, letting $A=M/\underline{x} M$, $B=R/\fa$ and $r=n-1$, we conclude from Lemma \ref{goodlemma} that $\Ext_R^m(R/\fa, R) = 0$. This yields a contradiction since $m=\depth_R(\fa, R)$; see \ref{dcx1}(ii).
Therefore, we have that $h\leq m$. This establishes the required inequality since $h=\depth_R(\fa, M)-n$; see \ref{dcx1}(iii).
\end{proof}

Next we proceed to obtain several consequences of Theorem \ref{di}. First we separate the case where $n=1$, which is nothing but Theorem \ref{hy} advertised in the introduction:

\begin{cor}\label{dicor} Let $R$ be a local ring, and let $\fa$ be an ideal of $R$ such that $\depth_R(\fa, R)\geq 1$. Set $M=\Omega_R N$ for some $R$-module $N$, where $N$ is $2$-Tor-rigid and generically free. If $M\neq 0$, then it follows that $\depth_R(\fa, M) \le \depth_R(\fa, R)+1$.\pushQED{\qed} 
\qedhere 
\popQED
\end{cor}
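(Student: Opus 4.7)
The plan is to obtain Corollary~\ref{dicor} as the immediate $n=1$ specialization of Theorem~\ref{di}. Setting $n=1$ and $m=\depth_R(\fa,R)$, the task reduces to verifying the four hypotheses of that theorem.

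I would first dispatch the straightforward hypotheses. Condition (i) asks that $M\neq 0$ and $m\geq n$; both follow directly from the assumptions $M\neq 0$ and $\depth_R(\fa,R)\geq 1 = n$. Condition (ii) requires $N$ to be $(n+1)$-Tor-rigid, which for $n=1$ is exactly the assumed $2$-Tor-rigidity of $N$. Condition (iv) demands that $\Tr\Omega_R^m(R/\fa)$ be an $(n-1)$st, i.e., $0$th, syzygy module, which is vacuous under the convention $\Omega_R^0(-)=-$.

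The only condition calling for any thought is (iii): that $N$ be locally free on $\widetilde{X}^0(R)$, meaning free at every $\fp\in\Spec(R)$ with $\depth(R_\fp)=0$. Such primes are precisely the associated primes of $R$. When $R$ is reduced these coincide with the minimal primes, at each of which $R_\fp$ is a field and every $R$-module is automatically free; thus the hypothesis that $N$ is generically free---parenthetically noted to be automatic for reduced $R$ in Theorem~\ref{hy}---supplies condition (iii).

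With all four hypotheses in place, Theorem~\ref{di} yields $\depth_R(\fa,M)\leq m+n=\depth_R(\fa,R)+1$, which is the desired inequality. I do not anticipate a genuine obstacle here, since the substantive work has already been absorbed into Theorem~\ref{di}; the only mild points to notice are that condition (iv) degenerates for $n=1$ and that generic freeness is the correct manifestation of local freeness on $\widetilde{X}^0(R)$ in the envisioned settings.
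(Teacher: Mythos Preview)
Your proposal is correct and follows precisely the approach the paper intends: Corollary~\ref{dicor} is stated with no proof (just a \qed\ mark), being the immediate $n=1$ specialization of Theorem~\ref{di}, and you have correctly identified that conditions (i), (ii), (iv) are either given or vacuous while (iii) is supplied by the generic freeness hypothesis on $N$.

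One small remark on condition (iii): your discussion justifies why the hypothesis ``$N$ generically free'' is automatic when $R$ is reduced (since then $\widetilde{X}^0(R)=\Ass(R)=\Min(R)$ and $R_\fp$ is a field there), but the corollary itself does not assume $R$ reduced. In the general setting the paper is tacitly identifying ``generically free'' with ``locally free on $\widetilde{X}^0(R)$''; this is the reading under which (iii) is immediate, and is consistent with the paper's usage throughout (note the term is never formally defined in the preliminaries). Your argument is entirely in line with this.
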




\begin{cor} \label{3.5} Let $R$ be a local complete intersection ring of codimension $c$ such that $\widehat{R}=S/(\underline{x})$ for some unramified regular ring $(S, \fn)$ and some $S$-regular sequence $\underline{x}\subseteq \fn^2$ of length $c$, where $c\leq 2$. Let $M$ be a nonzero $R$-module, and let $\fa$ be an ideal of $R$. Assume $M$ is generically free and torsion-free. Assume further $M$ has bounded Betti numbers. Then it follows that $\depth_R(\fa, M) \leq \depth_R(\fa, R)+1$.
\end{cor}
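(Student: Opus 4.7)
The plan is to apply Corollary \ref{dicor}. Since $R$ is a complete intersection, it is Cohen-Macaulay, and hence $\widetilde{X}^0(R) = \Min R$; thus the hypothesis that $M$ is generically free translates to $M$ being locally free on $\widetilde{X}^0(R)$. Combined with torsion-freeness (which yields $(\widetilde{S}_1)$), \ref{tf}(iii) applied with $n = 1$ produces an $R$-module $N$ such that $M \cong \Omega_R N$ and $\Ext^1_R(N, R) = 0$.

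Next, I verify the two remaining hypotheses of Corollary \ref{dicor} on $N$. Since the Betti numbers of $N$ are a shift of those of $M = \Omega_R N$, the module $N$ has bounded Betti numbers, so $\cx_R(N) \leq 1$. When $c = 0$, $R$ is regular and Theorem \ref{thmint} already yields the stronger conclusion $\depth_R(\fa, M) \leq \depth_R(\fa, R)$; when $c = 1$, every $R$-module is $2$-Tor-rigid by \ref{TR}(iii); when $c = 2$, one has $\cx_R(N) < c$, and \ref{TR}(iv) gives $2$-Tor-rigidity of $N$. To show $N$ is generically free, localize the short exact sequence $0 \to M \to F \to N \to 0$ (with $F$ free) at any $\fp \in \Min R$: then $R_\fp$ is a zero-dimensional complete intersection, hence Artinian local, and both $M_\fp$ and $F_\fp$ are free, so $\pd_{R_\fp}(N_\fp) \leq 1 < \infty$; the Auslander-Buchsbaum formula then forces $\pd_{R_\fp}(N_\fp) = 0$, so $N_\fp$ is free.

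If $\depth_R(\fa, R) \geq 1$, Corollary \ref{dicor} now applies and delivers the inequality. The residual case $\depth_R(\fa, R) = 0$ must be handled by hand: Cohen-Macaulayness of $R$ forces $\fa$ to have height zero, so $\fa \subseteq \fp_0$ for some $\fp_0 \in \Min R$; if $M_{\fp_0} \neq 0$, then $\depth_R(\fa, M) \leq \depth_{R_{\fp_0}}(M_{\fp_0}) = 0$ (since $R_{\fp_0}$ is Artinian local), while otherwise one uses torsion-freeness (so that $\Ass M \subseteq \Min R$) to produce an $M$-regular element $x_1 \in \fa$ and verify by a direct support-theoretic argument that $\depth_R(\fa, M/x_1 M) \leq 0$. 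The main obstacle I anticipate is precisely this boundary case; constructing $N$ and verifying its $2$-Tor-rigidity and generic freeness are routine given the tools recorded in Section~2.
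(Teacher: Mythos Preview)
Your approach matches the paper's exactly: express $M \cong \Omega_R N$, verify that $N$ is generically free and $2$-Tor-rigid, and invoke Corollary~\ref{dicor}. You are in fact more careful than the paper on two points—the paper cites only \ref{TR}(iv) for $2$-Tor-rigidity (which strictly speaking needs $c=2$, while your case split also covers $c\le 1$), and the paper's proof does not address the boundary case $\depth_R(\fa,R)=0$ at all, so your extra sketch there goes beyond what the paper supplies.
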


\begin{proof} 
Note that, as $R$ is Cohen-Macaulay, $M$ is generically free and torsion-free, we have that $M \cong \Omega_R N$ for some $R$-module $N$. Since $M$ has bounded Betti numbers, so does $N$. Hence it follows that $N$ is 2-Tor-rigid; see \ref{TR}(iv). Furthermore, $N$ is generically free because $M$ is generically free. Thus the result follows from Corollary \ref{dicor}.
\end{proof}




\begin{cor}\label{dicor2} Let $R$ be a local ring and let $\fa$ be an ideal of $R$ such that $\depth_R(\fa, R)\geq 1$. Let $N$ be a nonzero $R$-module such that $N$ is generically free and $\depth_R(N)\geq 1$.
If $M=\Omega_R(\fm N)\neq 0$, then it follows that $\depth_R(\fa, M) \le \depth_R(\fa, R)+1$.
\end{cor}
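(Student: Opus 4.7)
My plan is to derive this from Corollary \ref{dicor} by putting $\fm N$ in the role of the module $N$ appearing there. To do so, I need to verify three items: that $M=\Omega_R(\fm N)\neq 0$, that $\fm N$ is $2$-Tor-rigid, and that $\fm N$ is generically free.

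The first item is given as a hypothesis. The second is immediate from \ref{TR}(vi): since $N$ is nonzero with $\depth_R(N)\geq 1$, the submodule $\fm N$ is $2$-Tor-rigid. Thus the only substantive step is to establish the generic freeness of $\fm N$.

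To handle that, I would first remove the trivial case $\fa=R$, for which the inequality is vacuous, and so assume $\fa\subsetneq R$. The assumption $\depth_R(\fa,R)\geq 1$ then forces $\dim R\geq 1$, since otherwise $R$ would be Artinian local and the ideal $\fa\subseteq\fm$ would contain no $R$-regular element, contradicting $\depth_R(\fa,R)\geq 1$. Hence for every $\fp\in\Min(R)$ we have $\fp\neq\fm$, so $\fm\not\subseteq\fp$ and $\fm R_\fp=R_\fp$. Consequently $(\fm N)_\fp=\fm R_\fp\cdot N_\fp=N_\fp$, which is free by the generic freeness of $N$.

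With the three hypotheses of Corollary \ref{dicor} verified for the module $\fm N$, that corollary produces the desired inequality $\depth_R(\fa,M)\leq \depth_R(\fa,R)+1$. I do not expect any genuine obstacle; the whole proof is a check that $\fm N$ inherits the structural properties required by Corollary \ref{dicor} from $N$, with \ref{TR}(vi) supplying the $2$-Tor-rigidity and the identity $\fm R_\fp=R_\fp$ for minimal primes $\fp$ handling the generic freeness.
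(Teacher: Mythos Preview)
Your proposal is correct and follows essentially the same route as the paper: reduce to Corollary~\ref{dicor} by checking that $\fm N$ is $2$-Tor-rigid (via \ref{TR}(vi)) and generically free, the latter by first disposing of the Artinian case. The paper's proof is terser---it simply asserts ``we may assume $R$ is not Artinian'' and ``hence $\fm N$ is generically free''---but your explicit justification of both steps (via $\depth_R(\fa,R)\ge 1$ forcing $\dim R\ge 1$, and then $\fm R_\fp=R_\fp$ for minimal $\fp$) fills in exactly the details the paper leaves implicit.
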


\begin{proof} Note that we may assume $R$ is not Artinian. Hence, $\fm N$ is generically free. Moreover, $\fm N$ is $2$-Tor-rigid; see \ref{TR}(iv). Therefore, the claim follows from Corollary \ref{dicor}.
\end{proof}


\begin{cor}\label{dicor3} Let $R$ be a local ring, $\fa$ be an ideal of $R$ and let $\fb$ is a Burch ideal of $R$. Assume $\depth_R(\fa, R)\geq 1$ and $\depth_R(\fb, R)\geq 1$. Then it follows that $\depth_R(\fa, \fb) \le \depth_R(\fa, R)+1$.
\end{cor}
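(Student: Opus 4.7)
The plan is to recognize $\fb$ as a first syzygy and then invoke Corollary \ref{dicor}. Setting $N = R/\fb$, observe that the natural surjection $R \twoheadrightarrow R/\fb$ is a minimal free cover, so $\Omega_R N = \fb$. Thus, to conclude, it suffices to verify the three hypotheses of Corollary \ref{dicor}: that $N$ is $2$-Tor-rigid, that $N$ is generically free, and that $M := \Omega_R N$ is nonzero.

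The first condition is immediate from the Burch hypothesis: by \ref{TR}(v), $R/\fb$ is $2$-Tor-rigid whenever $\fb$ is Burch. For the remaining two conditions, I would use the assumption $\depth_R(\fb, R) \ge 1$, which means $\fb$ contains an $R$-regular element. In particular $\fb \neq 0$, hence $M = \fb \neq 0$. Moreover, for every associated prime $\fp$ of $R$ (in particular every minimal prime), $\fb \not\subseteq \fp$, so $(R/\fb)_\fp = 0$. The zero module is trivially free, so $N = R/\fb$ is generically free in the required sense.

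With these checks in hand, Corollary \ref{dicor} applied to $N = R/\fb$ and $M = \Omega_R N = \fb$ (using the hypothesis $\depth_R(\fa, R) \ge 1$) yields exactly the inequality $\depth_R(\fa, \fb) \le \depth_R(\fa, R) + 1$.

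There is no real obstacle here: the result is a direct specialization of Corollary \ref{dicor}, and the only point requiring a moment's thought is the identification $\fb = \Omega_R(R/\fb)$ together with the observation that $\depth_R(\fb, R) \ge 1$ is exactly what supplies both the nonvanishing of $M$ and the (trivial) generic freeness of $R/\fb$.
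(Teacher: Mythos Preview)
Your proof is correct and follows essentially the same route as the paper: set $N=R/\fb$, note $\fb=\Omega_R N$, use \ref{TR}(v) for $2$-Tor-rigidity, use $\depth_R(\fb,R)\ge 1$ to obtain generic freeness of $N$ (and $M\neq 0$), and conclude via Corollary~\ref{dicor}. The only cosmetic difference is that the paper cites \ref{dcx1}(i) directly to see that $(R/\fb)_\fp=0$ for every $\fp$ with $\depth(R_\fp)=0$, whereas you phrase this via associated primes and the existence of a regular element in $\fb$; these are equivalent observations.
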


\begin{proof} Note that $\fb=\Omega_R N$, where $N=R/\fb$ is $2$-Tor-rigid; see \ref{TR}(v). Moreover, $N$ is generically free since $\depth_R(\fb, R)\geq 1$; see \ref{dcx1}(i). Hence, the result follows from Corollary \ref{dicor}. 
\end{proof}

It is known that integrally closed ideals are Burch over local rings that have positive depth; see \cite[2.2 (3) and (4)]{BBB}. Therefore, Corollary \ref{dicor3} yields:

\begin{cor}\label{dicor4} Let $R$ be a local ring, and let $\fa$ and $\fb$ be ideals of $R$. Assume $\depth_R(\fa, R)\geq 1$ and $\depth_R(\fb, R)\geq 1$. If $\fb$ is integrally closed, then it follows that $\depth_R(\fa, \fb) \le \depth_R(\fa, R)+1$.\pushQED{\qed} 
\qedhere 
\popQED
\end{cor}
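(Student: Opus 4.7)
The plan is to deduce this as a direct consequence of Corollary \ref{dicor3} by verifying that $\fb$ is Burch. Since the author has already invoked \cite[2.2]{BBB} to assert that integrally closed ideals are Burch over local rings of positive depth, the entire argument reduces to checking the hypothesis ``$R$ has positive depth'' and then quoting the previous corollary.

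First I would dispose of the trivial case $\fb=R$: there the inequality reads $\depth_R(\fa,R) \le \depth_R(\fa,R)+1$, which is obvious. So we may assume $\fb \subseteq \fm$. In that case, using \ref{dcx1} (monotonicity of grade in the ideal, or simply that any $R$-regular sequence in $\fb$ is in particular an $R$-regular sequence in $\fm$), the assumption $\depth_R(\fb,R)\geq 1$ forces
\[
\depth(R)=\depth_R(\fm,R)\geq \depth_R(\fb,R)\geq 1.
\]
Hence $R$ is a local ring of positive depth.

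Next I would invoke \cite[2.2 (3) and (4)]{BBB}, as quoted in the paragraph preceding the statement, to conclude that the integrally closed ideal $\fb$ is Burch in $R$. At this point all the hypotheses of Corollary \ref{dicor3} are in place: $\fa$ is an ideal of $R$ with $\depth_R(\fa,R)\geq 1$, and $\fb$ is a Burch ideal with $\depth_R(\fb,R)\geq 1$. Applying Corollary \ref{dicor3} directly gives the desired inequality $\depth_R(\fa,\fb)\leq \depth_R(\fa,R)+1$, finishing the proof.

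There is no real obstacle here; the only thing to be careful about is the edge case $\fb=R$ (where ``Burch'' is not the right notion to invoke) and the verification that the depth assumption on $\fb$ genuinely implies positive depth for $R$, which is needed to quote the Burch property of integrally closed ideals from \cite{BBB}.
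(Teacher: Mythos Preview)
Your proposal is correct and follows exactly the paper's approach: the paper simply states that integrally closed ideals are Burch over local rings of positive depth (citing \cite[2.2 (3) and (4)]{BBB}) and then invokes Corollary~\ref{dicor3} with no further argument. Your version is in fact more careful than the paper's, since you explicitly verify that the hypothesis $\depth_R(\fb,R)\ge 1$ forces $\depth(R)\ge 1$ and you dispose of the edge case $\fb=R$, neither of which the paper addresses.
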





In the following corollaries, we show that condition (iv) of Theorem \ref{di} holds if $\fa$ is a Cohen-Macaulay ideal, i.e., $R/\fa$ is a Cohen-Macaulay ring.

\begin{cor}\label{diGor} Let $R$ be a Gorenstein local ring, $N$ be an $R$-module, and let $\fa$ be an ideal of $R$. Set $M=\Omega_R^n N$ for some integer $n\geq 1$ and $m=\depth_R(\fa, R)$.
Assume the following conditions hold:
\begin{enumerate}[label=(\roman*)]
\item $\fa$ is a Cohen-Macaulay ideal.
\item $M\neq 0$ and $m \geq n$. 
\item $N$ is locally free on $\widetilde{X}^{n-1}(R)$.
\item $N$ is $(n+1)$-Tor-rigid.
\end{enumerate}
Then it follows that $\depth_R(\fa, M) \le m+n$.
\end{cor}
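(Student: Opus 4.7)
The plan is to show that the Gorenstein plus Cohen--Macaulay hypotheses force the vanishing $\Ext_R^i(\Omega_R^m(R/\fa),R)=0$ for $i=1,\dots,n-1$, and then invoke \ref{tf}(i) to verify hypothesis (iv) of Theorem \ref{di}; once this is in place, Theorem \ref{di} applies directly.

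First, since $R$ is Gorenstein it is in particular Cohen--Macaulay, so by \ref{dcx1} we have $m=\depth_R(\fa,R)=\height(\fa)$, and since $R/\fa$ is Cohen--Macaulay we get
\[
\depth_R(R/\fa)=\dim(R/\fa)=\dim R-\height(\fa)=\dim R-m.
\]
Next I would iterate the depth lemma on the short exact sequences $0\to \Omega_R^{i+1}(R/\fa)\to F_i\to \Omega_R^i(R/\fa)\to 0$ for $i=0,\dots,m-1$. Each step increases the depth by one (until it reaches $\dim R$), giving $\depth_R(\Omega_R^m(R/\fa))\geq \dim R$. Hence $\Omega_R^m(R/\fa)$ is a maximal Cohen--Macaulay $R$-module.

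Over a Gorenstein local ring every maximal Cohen--Macaulay module is totally reflexive, so $\Ext_R^i(\Omega_R^m(R/\fa),R)=0$ for all $i\geq 1$. In particular the vanishing holds for $i=1,\dots,n-1$, and \ref{tf}(i) then yields that $\Tr\Omega_R^m(R/\fa)$ is an $(n-1)$st syzygy module. This verifies hypothesis (iv) of Theorem \ref{di}; hypotheses (i)--(iii) of that theorem coincide with (ii)--(iv) of the present corollary. Applying Theorem \ref{di} concludes the proof.

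The main (very minor) obstacle is the depth-lemma bookkeeping when $m$ is close to $\dim R$, but since $m=\height(\fa)\le \dim R$ in a Cohen--Macaulay ring and the free modules $F_i$ have depth equal to $\dim R$, the induction goes through cleanly. The conceptual content of the argument is entirely the observation that over a Gorenstein ring the $m$-th syzygy of a Cohen--Macaulay module of codepth $m$ is totally reflexive.
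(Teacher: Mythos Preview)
Your proof is correct and follows essentially the same route as the paper: verify hypothesis~(iv) of Theorem~\ref{di} by showing the relevant $\Ext$ groups vanish, then invoke \ref{tf}(i) and Theorem~\ref{di}. The only difference is in how the vanishing is justified: the paper appeals directly to local duality over the Gorenstein ring to get $\Ext_R^i(R/\fa,R)=0$ for all $i\neq m$, whereas you first use the depth lemma to see that $\Omega_R^m(R/\fa)$ is maximal Cohen--Macaulay and then use that MCM modules over a Gorenstein ring are totally reflexive---these are two standard packagings of the same fact.
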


\begin{proof} Note that, as $R/\fa$ is a Cohen-Macaulay ring, it follows $\depth(R/\fa)=\dim(R)-m$, and also $\Ext_R^i(R/ \fa, R)=0$ for $i\neq m$ by the local duality theorem; see \cite[3.5.8]{BH}. Therefore, $\Tr\Omega_R^{m}(R/\fa)$ is an $(n-1)$st syzygy module since $\Ext_R^i(R/ \fa, R) = 0$ for all $i=m +1, \ldots, m+n-1$; see \ref{tf}(i). Now, since all the hypotheses of Theorem \ref{di} hold, the required depth inequality follows from Theorem \ref{di}.
\end{proof}


The next corollary corroborates Corollary \ref{3.5}:

\begin{cor}\label{dici} Let $R$ be a local complete intersection ring of codimension $c$ such that $\widehat{R}=S/(\underline{x})$ for some unramified regular  ring $(S, \fn)$ and some $S$-regular sequence $\underline{x}\subseteq \fn^2$ of length $c$, where $c\geq 2$. Let $M$ be a nonzero $R$-module, and let $\fa$ be an ideal of $R$.
Assume the following hold:
\begin{enumerate}[label=(\roman*), resume]
\item $\fa$ is a Cohen-Macaulay ideal such that $\depth(\fa,R) \geq c-1$. 
\item $\cx_R(M)<c$.
\item $M$ satisfies $(\widetilde{S}_{c-1})$.
\item $M$ is locally free on $\widetilde{X}^{c-2}(R)$.
\end{enumerate}
Then it follows that $\depth_R(\fa, M) \leq  \depth_R(\fa,R)+c-1$.
\end{cor}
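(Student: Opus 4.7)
The plan is to reduce the statement to Theorem~\ref{di} (indeed, essentially to Corollary~\ref{diGor}) applied with $n=c-1$. Hypotheses (iii) and (iv) of the corollary on $M$ are exactly what is required to invoke \ref{tf}(iii), which produces an $R$-module $N$ satisfying $M=\Omega_R^{c-1}N$ together with $\Ext_R^i(N,R)=0$ for all $i=1,\ldots,c-1$.

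Next I would transfer the ``locally free'' property from $M$ to $N$. For any $\fp\in\widetilde{X}^{c-2}(R)$, localization gives $M_\fp\cong\Omega_{R_\fp}^{c-1}N_\fp$ up to free summands, so freeness of $M_\fp$ forces $\pd_{R_\fp}N_\fp\le c-1$. Combining this with the vanishing $\Ext_{R_\fp}^i(N_\fp,R_\fp)=0$ for $1\le i\le c-1$ and the standard fact that, over a local ring, the topmost Ext with values in the ring of a module of positive finite projective dimension is nonzero, one concludes $\pd_{R_\fp}N_\fp=0$, i.e., $N_\fp$ is free.

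Two hypotheses of Theorem~\ref{di} remain to be checked: that $N$ is $(n+1)=c$-Tor-rigid, and that $\Tr\Omega_R^{m}(R/\fa)$ is an $(n-1)=(c-2)$nd syzygy module. Since complexity is invariant under taking syzygies, $\cx_R(N)=\cx_R(M)<c$ by hypothesis (ii), so \ref{TR}(iv) applied to the presentation $\widehat{R}=S/(\underline{x})$ gives that $N$ is $c$-Tor-rigid. The second condition is handled exactly as in the proof of Corollary~\ref{diGor}: since $R$ is a complete intersection, hence Gorenstein, and $\fa$ is a Cohen-Macaulay ideal, local duality gives $\Ext_R^i(R/\fa,R)=0$ for every $i\ne m$, and consequently $\Ext_R^i(\Omega_R^m(R/\fa),R)=\Ext_R^{m+i}(R/\fa,R)=0$ for all $i\ge 1$; applying \ref{tf}(i) then yields the desired syzygy property. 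Because $m\ge c-1=n$ by (i) and $M\ne 0$ by assumption, all hypotheses of Theorem~\ref{di} are in place, and it yields $\depth_R(\fa,M)\le m+n=\depth_R(\fa,R)+c-1$.

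The only non-routine step is the transfer of local freeness from $M$ to $N$; everything else is assembled from machinery already developed earlier in the paper (mainly \ref{tf}(i) and (iii), \ref{TR}(iv), and Theorem~\ref{di}/Corollary~\ref{diGor}).
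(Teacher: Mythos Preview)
Your proof is correct and follows essentially the same route as the paper: invoke \ref{tf}(iii) to write $M=\Omega_R^{c-1}N$ with the appropriate Ext vanishing, transfer local freeness from $M$ to $N$, use $\cx_R(N)=\cx_R(M)<c$ together with \ref{TR}(iv) to get $c$-Tor-rigidity, and then apply Corollary~\ref{diGor} (whose remaining hypothesis you verify inline via local duality and \ref{tf}(i)). The only cosmetic difference is that the paper cites Corollary~\ref{diGor} directly rather than re-deriving its syzygy condition on $\Tr\Omega_R^m(R/\fa)$.
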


\begin{proof} Note that, by \ref{tf}(iii), we have $M=\Omega_R ^{c-1}N$ for some $R$-module $N$, where $\Ext_R^i(N, R)=0$ for all $i=1, \ldots, c-1$. Let $\fp \in \widetilde{X}^{c-2}(R)$. Then, since $M$ is locally free on $\widetilde{X}^{c-2}(R)$, it follows $\pd_{R_{\fp}}(N_{\fp})\leq c-1$. As $\Ext_R^i(N, R)=0$ for all $i=1, \ldots, c-1$, we conclude that $N_{\fp}$ is free. This shows that $N$ is locally free on $\widetilde{X}^{c-2}(R)$. Furthemore, as $\cx_R(N)=\cx_R(M)<c$, it follows that $N$ is $c$-Tor-rigid; see \ref{TR}(iv). Hence the result follows from Corollary \ref{diGor} by setting $n=c-1$.
\end{proof}

\begin{rmk} Let us note that, if $c=2$ in Corollary \ref{dici}, then the Cohen-Macaulay assumption on the ideal $\fa$ is not needed due to Corollary \ref{3.5}. Moreover, the assumption $\cx_R(M)<c$ in Corollary \ref{dici} implies the vanishing of the eta function if $R$ is an isolated singularity; in this case $M$ would be a $c$-Tor-rigid module; see \cite[6.3 and 6.8]{Da2}. In the appendix we recall the definition of the eta function and discuss some of its applications that are related to our results.
\end{rmk}

\section{Proof of Proposition \ref{keylem0}}

In this section we prove Proposition \ref{keylem0}. For its proof we need some basic facts which we recall next for the convenience of the reader; see, for example, \cite[1.2, 1.4 and 3.2]{M}.

\begin{chunk} \label{basic} Let $R$ be a ring, $x\in R$ and let $A$, $B$ and $C$ be $R$-modules. Set $\sigma=(0\to A \stackrel{f} \to B \stackrel{g} \to C \to 0) \in \Ext^1_R(C,A)$.
\begin{enumerate}[label=(\roman*)]
\item The connecting homomorphism $\Hom_R(C,C) \to \Ext^1_R(C,A)$ is given by the rule $\gamma \mapsto E$, where $E=(0\to A \to Z \to C \to 0)$ is the short exact sequence obtained by the following pull-back diagram:
\begin{align*}\notag{}
\vcenter{
\xymatrix{
0 \ar[r] & A \ar[r]^{f} & B \ar[r]^{g} \ar@{}[dr]|{\mathrm{PB}} & C \ar[r] & 0 \\
0 \ar[r] & A \ar[r] \ar@{=}[u] & Z \ar[r] \ar[u] & C \ar[r] \ar[u]_{\gamma} & 0 \\ 
}}
\end{align*}
\item The multiplication homomorphism $A \stackrel{x} \to A$ induces a homomorphism $\Ext^1_R(C,A)  \stackrel{x} \to \Ext^1_R(C,A)$ which sends $\sigma$ to $\sigma'$, where 
$\sigma'=(0\to A \to W \to C \to 0)$ is the short exact sequence obtained by the following push-out diagram:
\begin{align}\notag{}
\vcenter{
\xymatrix{
0 \ar[r] & A \ar[r]^{f} \ar[d]_{x} \ar@{}[dr]|{\mathrm{PO}} & B \ar[r]^{g} \ar[d] & C \ar[r] \ar@{=}[d] & 0 \\
0 \ar[r] & A \ar[r] \ar[d] & W \ar[r] \ar[d] & C \ar[r] & 0 \\
& A/xA \ar[d] \ar@{=}[r] & A/xA \ar[d] \\ 
& 0 & 0
}}
\end{align}
Therefore, it follows that $\sigma' \in x \cdot   \Ext^1_R(C,A)$.

Moreover, the diagram above induces the following commutative diagram where the leftmost square is a pushout square:
\begin{align}\notag{}
\vcenter{
\xymatrix{
0 \ar[r] & \Omega _R A \ar[r] \ar[d]_{x} \ar@{}[dr]|{\mathrm{PO}} & \Omega_R B \ar[r] \ar[d] & \Omega_R C \ar[r] \ar@{=}[d] & 0 \\
0 \ar[r] & \Omega_R A \ar[r]  & \Omega_R W \ar[r] & \Omega_R C \ar[r] & 0 
& 
& 
}}
\end{align}
Therefore, it follows that the bottom short exact sequence $0\to \Omega_R A \to \Omega_R W \to \Omega_R C \to 0$ belongs to $x \cdot   \Ext^1_R(\Omega_R C,\Omega_R A)$.
\item The multiplication homomorphism $C \stackrel{x} \to C$ induces a homomorphism $\Ext^1_R(C,A)  \stackrel{x} \to \Ext^1_R(C,A)$ which sends $\sigma$ to $\sigma''$, where 
$\sigma''=(0\to A \to V \to C \to 0)$ is the short exact sequence obtained by the following pull-back diagram:
\begin{align*}\notag{}
\vcenter{
\xymatrix{
& & 0 & 0 \\
& & C/xC \ar[u] \ar@{=}[r]& C/xC \ar[u]\\
0 \ar[r] & A \ar[r]^{f} & B \ar[r]^{g} \ar[u] \ar@{}[dr]|{\mathrm{PB}} & C \ar[r] \ar[u] & 0 \\
0 \ar[r] & A \ar[r] \ar@{=}[u] & V \ar[r] \ar[u] & C \ar[r] \ar[u]_x & 0 \\ 
}}
\end{align*}
Therefore, it follows that $\sigma'' \in x \cdot   \Ext^1_R(C,A)$.

\end{enumerate}
\end{chunk}

\begin{lem}\label{syz} Let $R$ be a ring, $x\in R$ and let $N$ be an $R$-module. Then the following are equivalent.
\begin{enumerate}[label=(\roman*)]
\item The multiplication map $N \stackrel{x}  {\longrightarrow} N$ factors through a free $R$-module.  
\item $x \cdot \Ext_R^i(N,-) = 0$ for each $i\geq 1$.
\item $x \cdot \Ext_R^1(N, \Omega_RN) = 0$.
\end{enumerate}

Furthermore, if one of these equivalent conditions holds and $x$ is a non zero-divisor on $N$, then there is an isomorphism $\Omega_R(N/xN) \cong N \oplus \Omega_RN$.
\end{lem}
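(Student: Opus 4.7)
The plan is to establish the equivalence $\text{(i)}\Leftrightarrow\text{(ii)}\Leftrightarrow\text{(iii)}$ by applying $\Hom_R(N,-)$ to a short exact sequence $\sigma : 0 \to \Omega_R N \to P \xrightarrow{\pi} N \to 0$ arising from a free cover of $N$ in a minimal free resolution, and then to derive the splitting isomorphism for $\Omega_R(N/xN)$ from the pull-back construction of \ref{basic}(iii) applied to $\sigma$ and the multiplication map $x : N \to N$.

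The implication $\text{(i)}\Rightarrow\text{(ii)}$ is formal: a factorization $x = \beta\alpha$ with $\alpha : N \to F$, $\beta : F \to N$, and $F$ free induces a factorization of the multiplication map on $\Ext^i_R(N,-)$ through $\Ext^i_R(F,-) = 0$ for every $i \geq 1$, and $\text{(ii)}\Rightarrow\text{(iii)}$ is trivial. For $\text{(iii)}\Rightarrow\text{(i)}$, the long exact sequence obtained by applying $\Hom_R(N,-)$ to $\sigma$ contains the segment
\[
\Hom_R(N,P) \to \Hom_R(N,N) \xrightarrow{\delta} \Ext^1_R(N, \Omega_R N),
\]
where $\delta$ is $R$-linear and sends $\id_N$ to $\sigma$. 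Hence $\delta(x \cdot \id_N) = x\sigma = 0$ by hypothesis, and exactness yields some $\alpha : N \to P$ with $\pi\alpha = x \cdot \id_N$, which factors $x : N \to N$ through the free module $P$.

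For the ``furthermore'' part, suppose in addition that $x$ is $N$-regular. Applying \ref{basic}(iii) to $\sigma$ and to multiplication by $x$ on the right-hand $N$ produces the pull-back $V$ and a commutative diagram whose right-hand column is the exact sequence $0 \to N \xrightarrow{x} N \to N/xN \to 0$, whose top row is $\sigma$, and whose bottom row $0 \to \Omega_R N \to V \to N \to 0$ represents the class $x\sigma \in \Ext^1_R(N, \Omega_R N)$. A direct diagram chase, using $N$-regularity of $x$, identifies the middle column as a short exact sequence $0 \to V \to P \to N/xN \to 0$. Since $x\sigma = 0$ by any of the equivalent conditions, the bottom row splits and $V \cong N \oplus \Omega_R N$. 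Comparing with the middle column and using that $P$ is free realizes $N \oplus \Omega_R N$ as a first syzygy of $N/xN$, whence $\Omega_R(N/xN) \cong N \oplus \Omega_R N$ under the paper's convention identifying stably isomorphic modules.

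The main obstacle is the diagram chase of \ref{basic}(iii): one must check carefully that the middle column is exact, which requires the $N$-regularity of $x$ both to show that $V \to P$ is injective (any element in the kernel is of the form $(0,n)$ with $xn=0$, forcing $n=0$) and to identify the kernel of $P \to N/xN$ with the image of $V$ (namely $\pi^{-1}(xN)$). Everything else is a formal consequence of the $\Ext$ long exact sequence and the standard correspondence between $\Ext^1$ classes and extensions.
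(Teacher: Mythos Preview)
Your proof is correct and follows essentially the same route as the paper's: the equivalence is established via the connecting homomorphism $\Hom_R(N,N)\to\Ext^1_R(N,\Omega_RN)$ sending $\id_N$ to the syzygy sequence, and the ``furthermore'' part uses exactly the pull-back diagram of \ref{basic}(iii). Your explicit verification that the middle column $0\to V\to P\to N/xN\to 0$ is exact (using $N$-regularity of $x$ for injectivity of $V\to P$) is a detail the paper leaves implicit in the diagram of \ref{basic}(iii), but otherwise the arguments coincide.
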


\begin{proof} Note that the implication (ii) $\Rightarrow$ (iii) is trivial. Hence we show (i) $\Rightarrow$ (ii) and (iii) $\Rightarrow$ (i).  

To establish (i) $\Rightarrow$ (ii), we assume $N \stackrel{x}  {\longrightarrow} N$ factors through a free $R$-module $F$, i.e., there exist $R$-module homomorphisms $f$ and $g$ such that $N \stackrel{f}  {\longrightarrow} F \stackrel{g}  {\longrightarrow} N$, where $gf=x \cdot 1_N$. Now let $X$ be an $R$-module and $n\geq 1$ be an integer. Then $f$ and $g$ induce $R$-module homomorphisms $f^{\ast}$ and $g^{\ast}$ such that $\Ext^n_R(N,X) \stackrel{g^{\ast}}  {\longrightarrow} \Ext^n_R(F,X) \stackrel{f^{\ast}} {\longrightarrow}\Ext^n_R(N,X)$, where $f^{\ast}g^{\ast}=x \cdot 1_{\Ext^n_R(N,X)}$. As  $\Ext^n_R(F,X)$ vanishes, we conclude that $f^{\ast}g^{\ast}=0$, i.e., $x \cdot \Ext^n_R(N,X)=0$. This proves the implication (i) $\Rightarrow$ (ii).

Next consider the syzygy exact sequence $E=(0 \to \Omega_R N \to G \xrightarrow{p} N \to 0)$, where $G$ is free. This induces the exact sequence
$0 \to \Hom_R(N, \Omega_R N) \to \Hom_R(N, G) \xrightarrow{p_*} \Hom_R(N, N) \to \Ext_R^1(N, \Omega_R N)$. Note that $1_N \mapsto E$ under the connecting homomorphism $\Hom_R(N, N) \to \Ext_R^1(N, \Omega_R N)$; see \ref{basic}(i). So the image of the map $N \stackrel{x}  {\longrightarrow} N$ under the connecting homomorphism belongs to $x \cdot \Ext_R^1(N, \Omega_RN)$. 

Now assume $x \cdot \Ext_R^1(N, \Omega_RN)=0$. Then the multiplication map $N \stackrel{x}  {\longrightarrow} N$ is in $\im(p_*)$, and hence it factors through the free module $G$. Consequently, (iii) $\Rightarrow$ (i) follows.

Next assume $x$ is a non zero-divisor on $N$. Then we consider the multiplication map $N \stackrel{x} \to N$ and make use of  \ref{basic}(iii) with the exact sequence $E$, and obtain short exact sequences of $R$-modules:
$$E_1=(0 \to V \to G \to N/xN \to 0) \text{ and } E_2=(0 \to \Omega_R N \to V \to N \to 0) \in x \cdot \Ext_R^1(N, \Omega_RN)=0.$$ Now $E_2$ splits so that $E_1$ yields the isomorphism $\Omega_R(N/xN) \cong V \cong N \oplus \Omega_R N$, as required.
\end{proof}

Next we use Lemma \ref{syz} and give a proof of Proposition \ref{keylem0}. We also need the following fact:

\begin{chunk} \label{use} Let $R$ be a local ring and let $0 \to A \to B \to C \to 0$ be a short exact sequence of $R$-modules. Then there is a short exact sequence $0 \to \Omega_RC \to A \oplus H \to B \to 0$, where $H$ is a free $R$-module; see, for example, \cite[2.2]{DTgrade}. Therefore, if $A$ is free, then $\Omega_R C \cong \Omega_R B$.
\end{chunk}

\begin{proof}[Proof of Proposition \ref{keylem0}] Note that, since $\underline{x}$ is $R$-regular and $M$ is an $n$th syzygy module, we see that $\underline{x}$ is also $M$-regular; see \ref{tf}(ii).
We proceed by induction on $n$. First assume $n=1$.

As in the proof of Lemma \ref{syz}, we look at the syzygy exact sequence $E=(0 \to \Omega_R N \to F \to N \to 0)$, where $F$ is free. Then, by using the multiplication map $M \stackrel{x_1} \to M$ and \ref{basic}(ii), we obtain short exact sequences of $R$-modules of the form
$$E_1=(0 \to F \to W \to M/xM \to 0) \text{ and } E_2=(0 \to \Omega_R N \to W \to N \to 0) \in x_1 \cdot \Ext_R^1(N, \Omega_RN)=0.$$ Now $E_2$ splits, and hence $E_1$ yields the required short exact sequence.


Next we assume $n > 1$, and set $N' = N \oplus \Omega_R N $, $M' = \Omega_R^{n-1} N' \cong \Omega_R^{n-1}N \oplus \Omega_R^n N$, and $\underline{x'}=x_1, \ldots, x_{n-1}$.
Note that it follows:
\begin{equation}\tag{\ref{keylem0}.2}
\Ext_R^1(N', \Omega_R N') = \Ext_R^1(N, \Omega_R N ) \oplus  \Ext_R^1(N, \Omega^2_R N ) \oplus \Ext_R^2(N, \Omega_R N ) \oplus  \Ext_R^2(N, \Omega^2_R N).
\end{equation}
As  $\underline{x} \cdot \Ext_R^1(N, \Omega_R N) = 0$, we see from Lemma \ref{syz} that $\underline{x} \cdot \Ext_R^i(N, -) = 0$ for all $i\geq 1$. Therefore, by (\ref{keylem0}.2), we conclude that $\underline{x}$, and hence $\underline{x'}$ annihilates the module $\Ext_R^1(N', \Omega_R N')$. Thus the following short exact sequence exists due to the induction hypothesis:
\begin{align}\tag{\ref{keylem0}.3}
0 \to F' \to \bigoplus_{i = 0}^{n-1} \Omega_R^{i+n-2}(N')^{\oplus \left( \begin{smallmatrix} n-1 \\ i \\ \end{smallmatrix} \right)} \to \Omega_R^{n-2}(M'/\underline{x'}M') \to 0,
\end{align}
where $F'$ is a free $R$-module. Furthermore, as $M' = \Omega_R^{n-1} N'$, we use \ref{use} along with (\ref{keylem0}.3) and obtain:
\begin{align}\tag{\ref{keylem0}.4}
\Omega_R^{n-1}(M'/\underline{x'}M') \cong \bigoplus_{i = 0}^{n-1} \Omega_R^{i} (M')^{\oplus \left( \begin{smallmatrix} n-1 \\ i \\ \end{smallmatrix} \right)}
\end{align}

Recall that $M=\Omega ^n_R N$. Hence there is a short exact sequence $0 \to M \to F \to \Omega_R^{n-1}N \to 0$ for some free $R$-module $F$. 
It follows, since $\underline{x'}$ is $R$-regular, that $\underline{x'}$ is $\Omega_R^{n-1}N$-regular; see \ref{tf}(ii). So we have a short exact sequence of the form:
$$
0 \to M/\underline{x'}M  \stackrel{\alpha}
     {\longrightarrow} F/\underline{x'} F \to \Omega_R^{n-1}N /\underline{x'}\Omega_R^{n-1} N \to 0.
$$
We take the pushout of $\alpha$ and the injective map $M/\underline{x'}M \stackrel{x_n}
     {\longrightarrow} M/\underline{x'}M$, and obtain the following commutative diagram:
\begin{align}\tag{\ref{keylem0}.5}
\begin{aligned}
\xymatrix{
& 0 \ar[d] & 0 \ar[d]\\
0 \ar[r] & M/\underline{x'}M \ar[r]^{\alpha} \ar[d]_{x_n} \ar@{}[dr]|{\mathrm{PO}} & F/\underline{x'} F \ar[r] \ar[d] & \Omega_R^{n-1}(N)/\underline{x'}\Omega_R^{n-1}(N) \ar[r] \ar@{=}[d] & 0 \\
0 \ar[r] & M/\underline{x'}M \ar[r] \ar[d] & W \ar[r] \ar[d] & \Omega_R^{n-1}(N)/\underline{x'}\Omega_R^{n-1}(N) \ar[r] & 0 \\
& M/\underline{x}M \ar[d] \ar@{=}[r] & M/\underline{x}M \ar[d] \\ 
& 0 & 0
}
\end{aligned}
\end{align}
Note that the short exact sequence $0 \to \Omega_R^{n-1}(M/\underline{x'}M) \to \Omega_R^{n-1}W \to \Omega_R^{n-1}\big(\Omega_R^{n-1}N/\underline{x'}\Omega_R^{n-1}N\big) \to 0$ belongs to $x_n \cdot \Ext^1(\Omega_R^{n-1}\big(\Omega_R^{n-1} N /\underline{x'}\Omega_R^{n-1}N \big), \Omega_R^{n-1}(M/\underline{x'}M))$; see (\ref{keylem0}.5) and \ref{basic}(ii).

Next note that we have the following isomorphisms:
\begin{equation}\tag{\ref{keylem0}.6}
\Ext_R^1(\Omega_R^{n-1}(M'/\underline{x'}M'), -) \cong \bigoplus_{i=0}^{n-1}\Ext_R^{i+1}(M', -)^{\oplus \left( \begin{smallmatrix} n-1 \\ i \\ \end{smallmatrix} \right)} \cong \bigoplus_{i=n}^{2n}\Ext_R^i(N, -)^{\oplus r(i)},
\end{equation}
where $r(i)$ is a positive integer depending on $i$. The first isomorphism in (\ref{keylem0}.6) is due to (\ref{keylem0}.4), while the second one follows from the fact that $M'\cong \Omega_R^{n-1}N \oplus \Omega_R^n N$.

Recall that $\Omega_R^{n-1} N$ is a direct summand of $M'$. Therefore, $\Omega_R^{n-1}\big(\Omega_R^{n-1} N /\underline{x'}\Omega_R^{n-1}N \big)$ is a direct summand of $\Omega_R^{n-1}(M'/\underline{x'}M')$. This implies, in view of (\ref{keylem0}.6), that $\Ext^1(\Omega_R^{n-1}\big(\Omega_R^{n-1} N /\underline{x'}\Omega_R^{n-1}N \big),-)$ is a direct summand of $\bigoplus_{i=n}^{2n}\Ext_R^i(N, -)^{\oplus r(i)}$. It follows, since $\underline{x} \cdot \Ext_R^i(N, -) = 0$ for all $i\geq 1$, that $x_n$ annihilates each direct summand of $\Ext_R^i(N, -)$ for each $i\geq 1$; in particular, we conclude that $x_n \cdot \Ext_R^1(\Omega_R^{n-1}\big(\Omega_R^{n-1} N /\underline{x'}\Omega_R^{n-1}N \big), \Omega_R^{n-1}(M/\underline{x'}M))=0$. This implies that the bottom short exact sequence in (\ref{keylem0}.5) splits so that we have the following isomorphism:
\begin{equation}\tag{\ref{keylem0}.7}
\Omega_R^{n-1} W \cong \Omega_R^{n-1}(M/\underline{x'}M) \oplus \Omega_R^{n-1}\big(\Omega_R^{n-1}N/\underline{x'}\Omega_R^{n-1}N\big). 
\end{equation}

Recall that, by (\ref{keylem0}.5), we have a short exact sequence $0 \to F/\underline{x'} F \to W \to M/\underline{x}M  \to 0$. Hence, by taking syzygy and using (\ref{keylem0}.7), we obtain the exact sequence:
\begin{equation}\tag{\ref{keylem0}.8}
0 \to \Omega_R^{n-1}(F/\underline{x'}F) \to \Omega_R^{n-1}(M/\underline{x'}M) \oplus \Omega_R^{n-1}(\Omega_R^{n-1}N/\underline{x'}\Omega_R^{n-1}N) \to \Omega_R^{n-1}(M/\underline{x}M) \to 0.
\end{equation}

The minimal free resolution $F_{\bullet}$ of $F/\underline{x'}F$ is of the form $0 \to F \to F^{\oplus n-1} \to \cdots \to F^{\oplus n-1} \to F \to 0$ since $\HH_i(F_{\bullet} \otimes_R \K(\underline{x'};R))=\Tor_i^R(F, R/\underline{x'}R)=0$ for all $i\geq 0$, where $\K(\underline{x'};R)$ is the Koszul complex of $R$ with respect to $\underline{x'}$. Therefore, it follows that:
\begin{equation}\tag{\ref{keylem0}.9}
\Omega_R^{n-1}(F/\underline{x'}F) \cong F.
\end{equation}

We have the following isomorphisms about the middle module in the short exact sequence (\ref{keylem0}.8):
\begin{align*}
\Omega_R^{n-1}(M/\underline{x'}M) \oplus \Omega_R^{n-1}\big(\Omega_R^{n-1}N/\underline{x'}\Omega_R^{n-1}N\big) &\cong \Omega_R^{n-1}(M'/\ul{x'}M') \\
&\cong \bigoplus_{i = 0}^{n-1} \Omega_R^{i} (M')^{\oplus \left( \begin{smallmatrix} n-1 \\ i \\ \end{smallmatrix} \right)} \\ 
&\cong \bigoplus_{i = 0}^{n-1} \big( \Omega_R^{i+n-1}N \oplus \Omega^{i+n}_RN \big)^{\oplus \left( \begin{smallmatrix} n-1 \\ i \\ \end{smallmatrix} \right)} \\ \tag{\ref{keylem0}.10}
&\cong
\Bigg[ \bigoplus_{i = 0}^{n-1} \big( \Omega_R^{i+n-1} N \big)^{\oplus \left( \begin{smallmatrix} n-1 \\ i \\ \end{smallmatrix} \right)} \Bigg]
\bigoplus 
\Bigg[\bigoplus_{i = 1}^{n} \big(\Omega_R^{i+n-1} N \big)^{\oplus \left( \begin{smallmatrix} n-1 \\ i-1 \\ \end{smallmatrix} \right)} \Bigg] \\
&\cong \Bigg[ \big(\Omega_R^{n-1} N \big)^{\oplus \left( \begin{smallmatrix} n \\ 0 \\ \end{smallmatrix} \right)} \Bigg]
\bigoplus 
\Bigg[\bigoplus_{i = 1}^{n-1} \big(\Omega_R^{i+n-1} N \big)^{\oplus \left( \begin{smallmatrix} n \\ i \\ \end{smallmatrix} \right)} \Bigg]
\bigoplus 
\Bigg[ \big(\Omega_R^{2n-1} N \big)^{\oplus \left( \begin{smallmatrix} n \\ n \\ \end{smallmatrix} \right)} \Bigg]\\
&\cong \bigoplus_{i = 0}^{n} \big(\Omega_R^{i+n-1}N \big)^{\oplus \left( \begin{smallmatrix} n \\ i \\ \end{smallmatrix} \right)}.
\end{align*}
In (\ref{keylem0}.10), the first and the third isomorphisms follow since $M' \cong \Omega^{n-1}_RN \oplus \Omega^{n}_RN = \Omega^{n-1}_RN \oplus M$, while the second isomorphism is nothing but (\ref{keylem0}.4). 
The other isomorphisms are elementary.

Now, in view of (\ref{keylem0}.9) and (\ref{keylem0}.10), we conclude that the short exact sequence in (\ref{keylem0}.8) is the required one. This completes the induction argument and hence the proof of the proposition.
\end{proof}



We end this section with a consequence of Proposition \ref{keylem0} which corroborates \cite[2.1]{HP} and \cite[2.2]{Tak10}.


\begin{cor} \label{propint} Let $R$ be a local ring, $N$ a nonzero $R$-module, and let $M= \Omega^n_R N$ for some $n\geq 1$.
Assume there is an $R$-regular sequence $\underline{x}= x_1, \ldots, x_n$ of length $n$ such that $\underline{x} \cdot \Ext_R^1(N, \Omega_R N) = 0$.
Then the following isomorphism holds:
\begin{equation}\tag{\ref{propint}.1}
\Omega_R^n(M/\underline{x}M) \cong \bigoplus_{i = 0}^{n} \Omega_R^{i}(M)^{\oplus \left( \begin{smallmatrix} n \\ i \\ \end{smallmatrix} \right)}
\end{equation}
\end{cor}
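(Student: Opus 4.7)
The plan is to obtain the isomorphism by taking one more syzygy of the short exact sequence (\ref{keylem0}.1) produced by Proposition \ref{keylem0}. Since $F$ in (\ref{keylem0}.1) is free, the standard syzygy trick recorded in \ref{use} tells us that the syzygy of the right-hand term is stably isomorphic to the syzygy of the middle term; concretely, from the short exact sequence
\[
0 \to F \to \bigoplus_{i=0}^{n}\bigl(\Omega_R^{i+n-1}N\bigr)^{\oplus\binom{n}{i}} \to \Omega_R^{n-1}(M/\underline{x}M) \to 0
\]
I get
\[
\Omega_R\bigl(\Omega_R^{n-1}(M/\underline{x}M)\bigr) \;\cong\; \Omega_R\Bigl(\bigoplus_{i=0}^{n}\bigl(\Omega_R^{i+n-1}N\bigr)^{\oplus\binom{n}{i}}\Bigr),
\]
working up to stable isomorphism as agreed at the outset.

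Next I simplify both sides. The left-hand side is $\Omega_R^n(M/\underline{x}M)$. On the right, syzygy distributes over direct sums, and since $M = \Omega_R^n N$ we have $\Omega_R^{i+n} N \cong \Omega_R^{i} M$ for each $i$. Combining these observations yields
\[
\Omega_R^n(M/\underline{x}M) \;\cong\; \bigoplus_{i=0}^{n}\bigl(\Omega_R^{i+n} N\bigr)^{\oplus\binom{n}{i}} \;\cong\; \bigoplus_{i=0}^{n}\bigl(\Omega_R^{i} M\bigr)^{\oplus\binom{n}{i}},
\]
which is the desired isomorphism (\ref{propint}.1).

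There is essentially no obstacle: the entire content of the corollary is packaged inside Proposition \ref{keylem0}, and the only additional input is the elementary syzygy-cancellation fact \ref{use} together with the identity $\Omega_R^{i+n}N \cong \Omega_R^i M$. The only points requiring a line of care are the convention that we do not distinguish stably isomorphic modules (so that the free summand $F$ in (\ref{keylem0}.1) can be dropped after one syzygy) and the commutation of $\Omega_R$ with finite direct sums, both of which are already in force in the paper.
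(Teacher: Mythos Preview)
Your proposal is correct and follows essentially the same route as the paper's proof: apply Proposition~\ref{keylem0}, use the syzygy-cancellation fact \ref{use} (since $F$ is free) to pass from $\Omega_R^{n-1}(M/\underline{x}M)$ to $\Omega_R^{n}(M/\underline{x}M)$, then distribute $\Omega_R$ over the direct sum and use $\Omega_R^{i+n}N \cong \Omega_R^{i}M$. The paper merely writes out the intermediate short exact sequence from \ref{use} explicitly before drawing the same conclusion.
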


\begin{proof} It follows from Proposition \ref{keylem0} that we have the following short exact sequence:
\begin{equation}\notag{}
0 \longrightarrow F \longrightarrow \bigoplus_{i = 0}^n \bigg(\Omega_R^{i+n-1}N\bigg)^{\oplus\left(\begin{smallmatrix} n \\ i \\ \end{smallmatrix} \right)} \longrightarrow\Omega_R^{n-1}(M/\underline{x}M) \longrightarrow 0, 
\end{equation}
where $M=\Omega_R ^nN$. Therefore \ref{use} yields the short exact sequence
\begin{equation}\notag{}
0 \longrightarrow \Omega_R \big( \Omega_R^{n-1}(M/\underline{x}M) \big)  \longrightarrow F \oplus G  \longrightarrow  \bigoplus_{i = 0}^n \bigg(\Omega_R^{i+n-1}N\bigg)^{\oplus\left(\begin{smallmatrix} n \\ i \\ \end{smallmatrix} \right)}\longrightarrow 0, 
\end{equation}
where $G$ is a free $R$-module. Hence, we conclude that:
\begin{equation}\notag{}
\Omega_R^n(M/\underline{x}M) \cong \Omega_R 
\Bigg( 
\bigoplus_{i = 0}^n \Bigg(\Omega_R^{i+n-1}N\bigg)
\Bigg) ^{\oplus\left(\begin{smallmatrix} n \\ i \\ \end{smallmatrix} \right)} \cong \bigoplus_{i = 0}^n \Bigg(\Omega_R^{i+n}N\bigg)^{\oplus\left(\begin{smallmatrix} n \\ i \\ \end{smallmatrix} \right)} \cong  \bigoplus_{i = 0}^{n} \Omega_R^{i}(M)^{\oplus \left( \begin{smallmatrix} n \\ i \\ \end{smallmatrix} \right)}.
\end{equation}
\end{proof}


\appendix
\section{On Tor-rigid modules over complete intersection rings}\label{app}


Recall that, if $R$ is a hypersurface ring, that is quotient of an unramified regular local ring, then each $R$-module that has finite projective dimension is Tor-rigid; see \ref{TR}(ii). In this section we generalize this result and observe that modules that are eventually periodic of period one are Tor-rigid over such hypersurfaces. In fact, we show that such periodic modules are $c$-Tor-rigid over complete intersections of codimension $c$; see \ref{A2}. In particular, we conclude that modules that are eventually periodic of period one satisfy the depth inequality of Theorem \ref{thmint}; see \ref{A3}.

Throughout, $R$ denotes a local complete intersection ring such that $\widehat{R}=S/(\underline{x})$ for some unramified regular ring $(S, \fn)$ and some $S$-regular sequence $\underline{x}\subseteq \fn^2$ of length $c$, where $c\geq 1$. The main tool we use in this section is the \emph{eta function} of Dao, which we recall next.

\begin{dfn}\label{function} (\cite[4.2, 4.3(1), 5.4]{Da2}; see also \cite[3.3]{CeD}) Let $M$ and $N$ be $R$-modules. Assume we have $\length_R(\Tor_i^R(M,N))<\infty$ for all $i\gg 0$. Set $f=\inf\{s:\len_{R}(\Tor_i^R(M,N))<\infty \textnormal{ for all } i\geq s \}$. Then the eta function $\h^R(M,N)$ is defined as follows:
\begin{equation}
\begin{split}
 \h^R(M,N) = \lim_{n\to\infty} \frac{\displaystyle{\sum\limits^{n}_{i=f}(-1)^i \length_R(\Tor_i^R(M,N))}}{\displaystyle{n^c}}
\end{split}\notag{}
\end{equation} \pushQED{\qed} 
\qedhere 
\popQED
\end{dfn}




In the following we collect some properties of the eta function:

\begin{chunk} \label{Te} Let $M$ and $N$ be $R$-modules.
\begin{enumerate}[label=(\roman*)]
\item If $\h^R(M, N)=0$, then the pair $(M,N)$ is $c$-Tor-rigid; see \ref{TR} and \cite[6.3]{Da2}. For example, if $c=1$ and $R$ is a simple hypersurface singularity of even dimension, then it follows that $\h^R(M, N)=0$ for all $R$-modules $M$ and $N$ so that each module is Tor-rigid over $R$; see \cite[4.4]{Da2} and \cite[3.16]{Da1}.
\item The eta function is additive whenever it is defined. Namely, if $0 \to M' \to M \to M'' \to 0$ is a short exact sequence of $R$-modules such that $\Tor_i^R(M', N)$ and $\Tor_i^R(M'', N)$ have finite length for all $i \gg 0$, then it follows that $\h^R(M, N) = \h^R(M', N) + \h^R(M'', N)$; see \cite[4.3(2)]{Da2}. \pushQED{\qed} 
\qedhere 
\popQED
\end{enumerate}
\end{chunk}

We proceed to observe that modules that are eventually periodic of period one are $c$-Tor-rigid over $R$.

\begin{chunk} \label{A1} Let $N$ be an $R$-module such that $N$ is eventually periodic of period one, i.e., $\Omega_R ^n N \cong \Omega_R^{n+1} N$ for all $n\gg 0$. If $X$ is an $R$-module and $\Tor_i^R(N,X)$ has finite length for all $i\gg 0$, then the pair $(N,X)$ is $c$-Tor-rigid over $R$.

To see this, first note that $\h^{R}(N,X)$ is well-defined; see \ref{Te}. Moreover, for $n\gg 0$, the following equalities hold:
\begin{align*}
\h^{R}(N,X) &= (-1)^n \h^{R}(\Omega_R^n N, X) \\
&= (-1)^n \h^{R}( \Omega_R^{n+1} N, X)  \\
&= (-1)^n (-1)^{n+1} \h^{R}( N, X)  \\
&= - \h^{R}(N, X).  
\end{align*}
Here, the first and third equalities are due to \ref{Te}(ii), while the second one follows by the hypothesis. Consequently, we conclude $\h^{R}(N, X)=0$, and this implies that the pair $(N,X)$ is $c$-Tor-rigid; see \ref{Te}(i).
\end{chunk}

\begin{chunk} \label{A2} Let $N$ be an $R$-module such that $\Omega_R ^n N \cong \Omega_R^{n+1} N$ for all $n\gg 0$.  Then it follows that $N$ is $c$-Tor-rigid.

To see this, let $X$ be an $R$-module with $\Tor_1^R(N,X)=\ldots = \Tor_c^R(N,X)=0$. We set $r=\dim_R(N\otimes_RX)$ and proceed by induction on $r$ to show that $\Tor_i^R(N,X)=0$ for all $i\geq 1$. 

If $r\leq 0$, then the claim follows from \ref{A1}. So we assume $r\geq 1$, and pick $\fp \in \Supp_R(N\otimes_RX)$ such that $\fp\neq \fm$. Note that $\Omega_{R_{\fp}} ^n N_{\fp} \cong \Omega_{R_{\fp}}^{n+1} N_{\fp}$ for all $n\gg 0$. Then it follows by the induction hypothesis that $\Tor_i^R(N,X)_{\fp}=0$ for all $i\geq 1$. This shows that $\Tor_i^R(N,X)$ has finite length for all $i\geq 1$. Hence, by \ref{A1}, the pair $(N, X)$ is $c$-Tor-rigid over $R$. Thus, as $\Tor_1^R(N,X)=\ldots = \Tor_c^R(N,X)=0$, we conclude that $\Tor_i^R(N,X)$ vanishes for each $i\geq 1$, as claimed. 
\end{chunk}

\begin{chunk}  \label{A3} Let $R$ be a hypersurface ring, $\fa$ be an ideal of $R$, and let $N$ be an $R$-module.
\begin{enumerate}[label=(\roman*)]
\item If $N$ is an $R$-module such that $\Omega_R ^n N \cong \Omega_R^{n+1} N$ for all $n\gg 0$, then it follows that $N$ is Tor-rigid and hence $\depth_R(\fa, N) \leq \depth_R(\fa, R)$; see Theorem \ref{thmint} and \ref{A2}.
\item If $\Omega_R N \cong M \oplus \Omega_R M$ for some $R$-module $M$, then it follows from part (i) that $N$ is Tor-rigid over $R$ and hence $\depth_R(\fa, N) \leq \depth_R(\fa, R)$: this is because $M$ is eventually periodic of period at most two \cite{Ei} and hence $N$ is eventually periodic of period one.\pushQED{\qed} 
\qedhere 
\popQED
\end{enumerate}
\end{chunk}

If $R$ is hypersurface, then it is clear that modules of the form $M \oplus \Omega_R M$ are Tor-rigid over $R$; see \ref{TR}(ii). On the other hand, the fact that modules as in \ref{A3}(ii) are Tor-rigid over $R$ seems interesting to us since a module over a hypersurface ring need not be Tor-rigid in general, even if its syzygy is Tor-rigid.

\section*{acknowlwdgements} The authors are grateful to  Shunsuke Takagi for his help and explaining the arguments of \ref{Tak1} and \ref{Tak2} to them.

\bibliography{a}
\bibliographystyle{amsplain}

\end{document}